\theoremstyle{plain}
\newtheorem*{theorem*}{Theorem}
\renewcommand{\epsilon}{\varepsilon}
\newcommand{\R}{{\mathbb R}}
\newcommand{\Z}{{\mathbb Z}}
\newcommand{\h}{\hbar}
\renewcommand{\phi}{\varphi}
\newtheorem{theo}[equation]{{\sc Theorem}}
\newtheorem{cor}[equation]{{\sc Corollary}}
\newtheorem{conj}[equation]{{\sc Conjecture}}
\newtheorem{lem}[equation]{{\sc Lemma}}
\newtheorem{prop}[equation]{{\sc Proposition}}
\theoremstyle{definition}
\newtheorem{defn}[equation]{{\sc Definition}}
\theoremstyle{remark}
\newtheorem{rem}[equation]{Remark}
\theoremstyle{assumption}
\title[Small perturbations of spheres]{Laplace-Beltrami spectrum of ellipsoids that are close to spheres and analytic perturbation theory}
\author{Suresh Eswarathasan}
\address{Dalhousie University \\
Department of Mathematics $\&$ Statistics, Chase Building Rm 316 \\
Coburg Road \\
Halifax, Nova Scotia \\
Canada}
\email{sr766936@dal.ca}
\author{Theodore Kolokolnikov}
\address{Dalhousie University \\
Department of Mathematics $\&$ Statistics, Chase Building Rm \\
Coburg Road \\
Halifax, Nova Scotia \\
Canada}
\email{tkolokol@gmail.com}
\begin{document}

\begin{abstract}
We study the spectrum of the Laplace Beltrami operator on ellipsoids.
For ellipsoids that are close to the sphere, we use analytic perturbation theory
to estimate the eigenvalues up to two orders. We show that for biaxial
ellipsoids sufficiently close to the sphere, the first $L^2$ eigenvalues have
multiplicity at most two, and characterize those that are simple. For the
triaxial ellipsoids sufficiently close to the sphere that are not biaxial, we
show that at least the first sixteen eigenvalues are all simple.

We also give the results of various numerical experiments, including comparisons
to our results from the analytic perturbation theory, and approximations for the
eigenvalues of ellipsoids that degenerate into infinite cylinders or
two-dimensional disks. We propose a conjecture on the exact number of nodal
domains of near-sphere ellipsoids.  \end{abstract}

\maketitle

\section{Introduction}

This article aims to compute the Laplace-Beltrami spectrum (and its
multiplicities) of a class of 2-dimensional ellipsoids in $\mathbb{R}^3$
through analytic perturbation theory \cite{R69}, more specifically 
eigenvalue perturbations \cite{Hin95}.  
We describe our main results first before providing
motivations and a discussion of related results.

\subsection{Main results}

Let us give the definition for the main object of study in our article:
\begin{defn} \label{defn:E_abc}
Let $a,b,c>0$.  We denote by $E_{a,b,c} \subset \R^3$ the ellipsoid given by
\begin{equation*}
 \left\{ (x,y,z) \in \mathbb{R}^3 \, | \, \frac{x^2}{a^2} + \frac{y^2}{b^2} + \frac{z^2}{c^2} = 1 \right\}.
\end{equation*}
In other words, $E_{a,b,c}$ is an ellipsoid with axes $a,b$, and $c$.
\end{defn}

{Let $-\Delta_g$ be the positive Laplace-Beltrami operator on $E_{a,b,c}$ and consider the corresponding eigenvalue problem $-\Delta_g
\phi_{\Lambda} = \Lambda \phi_{\Lambda}$.    Recall that on the sphere $S^2 = E_{1,1,1}$, the eigenvalues are $\Lambda = l(l+1)$ having multiplicity
$2l+1$, with $l \in \mathbb{Z}_{\ge0}$. We are now in a position to state main results}:

\begin{theo} \label{thm:biaxial}
Let $L \in \mathbb{N}$ and $\alpha, \beta \in \mathbb{R}$ with at least one being non-zero.  Consider the biaxial ellipsoid $E_{a,a,b}$ where $%
a=1+\varepsilon \alpha ,\ b=1+\varepsilon \beta $ where $\epsilon \in \mathbb{R}^+$ and $g_{\epsilon}$ the metric from $\mathbb{R}^3$ restricted to $E_{a,a,b}$.

Then there exists $\epsilon_0(\alpha, \beta,L)$ such that for all $\epsilon < \epsilon_0$ and $\Lambda \in {\rm{spec}}(-\Delta_g) \cap [0,L(L+1)]$, we have
\begin{equation*}
\Lambda =l\left( l+1\right) +\varepsilon \Lambda _{1}+O(\varepsilon ^{2})
\end{equation*}%
for $ l=0,1,2,\ldots L$ and $m=-l,\ldots ,l$ with $\Lambda _{1}$ being given by the explicit formula
\begin{equation} \label{eqn:Lambda1biaxial}
\Lambda _{1}= -2\alpha
l\left( l+1\right)  + \left( \alpha -\beta \right) \frac{2l\left( l+1\right) }{\left(2l+3\right) \left( 2l-1\right) }\left( 2l^{2}-2m^{2}+2l-1\right).
\end{equation}
Moreover, each $\Lambda$ has multiplicity two except for those whose expansion has $m=0$, which in this case corresponds to multiplicity one.
\end{theo}

\begin{theo} \label{thm:triaxial}
Let $l \in \mathbb{N}$ and $\alpha, \beta, \gamma \in \mathbb{R}$ be given with at least one being non-zero.  Consider the triaxial ellipsoid $E_{a,b,c}$ where $%
a=1+ \alpha \varepsilon ,\ b=1+\beta \varepsilon$ and $c = 1 + \gamma \epsilon$ and $g_{\epsilon}$ the metric from $\mathbb{R}^3$ restricted to $E_{a,b,c}$.

Then there exists $\epsilon_0(\alpha, \beta, \gamma, l)$ such that for all
$0<\epsilon < \epsilon_0$ and $\Lambda \in {\rm{spec}}(-\Delta_g) \cap [l(l+1) - 2l, l(l+1) + 2l]$, we have
\begin{equation} \label{eqn:triax_exp}
\Lambda=l\left(  l+1\right)  + \Lambda_{1} \, \varepsilon + \mathcal{O}(\epsilon^2)
\end{equation}
where $\Lambda_{1}$ is an eigenvalue of a $(2l+1) \times (2l+1)$ matrix and
whose entries yield explicit formulas in $l, \alpha, \beta,$ and $\gamma$.
Thus, given $L \in \mathbb{N}$, there exists $\epsilon_0(\alpha, \beta,
\gamma, L)$ such that the expansion (\ref{eqn:triax_exp}) holds for all
$\Lambda \in {\rm{spec}}(-\Delta_g) \cap [0, L(L+1)]$.  Lastly, for
$l=1,2,3$ in particular, there exists $\epsilon_0$ such that ${\rm{spec}}(-\Delta_g) \cap [l(l+1) - 2l, l(l+1) + 2l]$ contains only simple eigenvalues for all $\epsilon < \epsilon_0$.
\end{theo}

For these explicit formulas pertaining to $\Lambda_1$, see Proposition \ref{prop:triax_entries} in Section \ref{sect:triax}.

\subsection{Some motivations}

From the point of view of classical mechanics, ellipsoids $E_{a,b,c}$ form one of the oldest known examples of integrable systems, themselves holding a venerable place in the subject.  Their quantum analogues have been intensively studied in the last forty years, with numerous contributions arising from a beautiful mixture of symplectic geometry and WKB approximations, the connection being exploited by microlocal/semiclassical analysis.  We survey some related and microlocally-oriented results in the next section.

We emphasize that the spectrum of geometric spaces with large symmetry groups has been explicitly computed \cite{Ter} with a partial list being compact rank-one symmetric spaces (CROSSes), certain projective spaces, Steifel manifolds, and Grassmannians. However, it appears that not much is known for manifolds lacking large symmetry groups like biaxial and triaxial ellipsoids let alone their multiplicities.  While microlocal analysis has addressed the approximation of the eigenvalues in the semiclassical limit, we have not found any literature providing bounds on multiplicity.

The study of Laplace's equation on $\R^3$ using ellipsoidal-type coordinate systems is well-developed and is centered around the analysis of the Lam\'e equation.  In fact, the word ``ellipsoidal harmonics" has been attached to a variety of families of functions including for eigenfunctions on $E_{a,b,c}$.  See the treatise of Dassios \cite{Dass} for a survey and in particular Chapter 4.4 for a brief treatment on product-form eigenfunctions on $E_{a,b,c}$.

We make note that the \textit{use of analytic perturbation theory allows for both accurate approximations (as confirmed by our numerics in Section \ref{sect:biaxial_numerics}) and multiplicity calculation}.  Furthermore, it provides us the opportunity to bypass the use of Bohr-Sommerfeld quantization rules and the computation of subprincipal symbols, per microlocal analysis, which are two highly powerful but technical concepts.  In this vein, the contents of our article appear to be novel.

The question of what is $\rm{spec}(-\Delta_g)$ has played a prominent role in recent years in data analysis. For example, in \cite{RWP06} eigenvalues of the Laplace–Beltrami operator were used to extract ``fingerprints" which characterize surfaces and solid objects. In \cite{BN03, CL06}, these eigenvalues (and their corresponding eigenfunctions) were used for dimensionality reduction and data representation.

We close our motivations section by briefly discussing the concept of shape DNA in shape matching.  By ``shape DNA", we mean the first $N$ elements of $\rm{spec}(-\Delta_g)$ for $(M,g)$.  Our results can be encompassed as the computation of this shape DNA for ellipsoids that are close to spheres.  Note that shape DNA plays a crucial role in the representation of data sets, itself being usefuk in copyright protection and database retrieval.  For more applications, see \cite{RWP06}.

\subsection{Related results from semiclassics}

There are a number of relevant results from the semiclassical analysis literature that require discussion.  We begin with the work of Pankratova \cite{P70}.  In this article, the author uses a special ellipsoidal coordinate system and the so-called parabolic-equation method (in the spirit of Babich and Lazutkin) to compute high-frequency asymptotics for eigenvalues arising only from product-form eigenfunctions of $-\Delta_g$.

Three works of greatest relevance to Theorems \ref{thm:biaxial} and \ref{prop:triax_entries} are those of Sj\"ostrand \cite{Sj92}, Colin de Verd\`ere-Parisse \cite{CdVP99}, and Toth \cite{T95}, each of which we describe in detail.  First, the work of Toth \cite{T95} not only proves the quantum integrability of $E_{a,b,c}$ (i.e. the existence of a second quantum Hamiltonian on $E_{a,b,c}$ that commutes with $\Delta_g$) but also formulates an interesting conjecture: the joint spectrum for these quantum Hamiltonians is encoded by a second-order complex ODE with automorphic boundary conditions.

An asymptotic description of eigenvalues for semiclassical Schr\"odinger operators whose potentials satisfy a non-degeneracy condition is given in the work of Sj\"ostrand \cite{Sj92}.  In the case of biaxial 2-dimensional ellipsoids, one can reduce to a Mathieu-type operator $A(\h)$ on the non-$S^1$-invariant axis and then use quantum Birkhoff normal forms to read off formulas for the energies in a fixed window (in our setting, this corresponds to the low-energy spectrum with the constraints that $m^2 \approx l^2$) but to order $\h$.  This leads us to the more geometric work of Colin de Verdi\`ere-Parisse.

The articles \cite{CdVVN03, CdVP99} investigate Bohr-Sommerfeld quantization rules in the presence of singularities generated by select classes of quantum Hamiltonians.  The spectrum of ellipsoids is studied as an application by Colin de Verdi\`ere-Vu Ngoc and depends strongly on some previous work of Colin de Verdi\`ere-Parisse in one dimension.  The work \cite{CdVP99} determines that the energies of a certain class of 1-dimensional semiclassical Schr\"odinger operators $P(\h)$ in a fixed window can be  explicitly deduced from solving for the coefficients $a_j$ in the equation $e^{\frac{\sum_j a_j \h^j}{\h}} = 1$ for small enough $h$.  Following this reasoning in the case of biaxial 2-dimensional ellipsoids should allow one to reproduce the multiplicity information given in Theorem \ref{thm:biaxial}.  To perform this calculation however, it appears one needs to push the quantum Birkhoff normal forms to greater precision via the calculation of $P(\h)$'s subprincipal terms.  In some sense, our work proceeds in this direction albeit through the lens of analytic perturbation theory.  This is one avenue in which our Theorems \ref{thm:biaxial} and \ref{thm:triaxial} are new.

{ In closing this discussion, we point the connections between our method and high-frequency quasimode constructions in semiclassical
analysis.  If we write out the Laplace-Beltrami operator on $E_{a,b,c}$ as $-\h^2 \left( \Delta_g + \epsilon A_1 + \mathcal{O}(\epsilon^2) \right)$, it becomes
clear that we are utilizing an additional small parameter $\epsilon$ whilst bounding $\h$ away from zero to write quasimodes on $S^2$.  In fact, we are
computing the quasifrequencies for $-\Delta_g$ but not in a high-frequency regime. This naturally results in our Theorems \ref{thm:biaxial} and
\ref{thm:triaxial} not being descriptive of high frequencies but at the upshot of being descriptive for multiplicities.  }

\subsection{Outline of the paper}

{Our main tool is the theory of perturbations, a sharply defined set of ideas that is described for instance in the classic applied mathematics
text of Hinch \cite{Hin95} and in a more pure, theoretical fashion in the treatise of Kato \cite{K}.
While there exists a number of sources
from pure mathematics rigorizing asymptotics, including \cite{K}, there appear to be much fewer sources demonstrating the analyticity of eigenvalues
for analytic families of metrics.  In this paper, we utilize a combination of results from Rellich's perturbation theory notes (unfortuntaely, now
discontinued) from the Courant Mathematical Institute \cite{R69} and an article of Bando-Urakawa \cite{BU83} on eigenvalues for certain families of
Laplace-Beltrami operators.  In fact, for the benefit of easier reading, Section \ref{sect:analyt_pert} is dedicated to an appreciable reproduction of useful
results from \cite{BU83} along with some alternate proofs coming from Rellich's Courant notes.

Once we have explained our theoretical tools, particularly in Theorem \ref{thm:berger_lem}, Sections \ref{sect:triax} and \ref{sect:triax} are dedicated to explicit calculations with the coordinate representations of $-\Delta_{g}$ on $E_{a,b,c}$ and the ultraspherical harmonic basis on $L^2(S^2)$.  Section \ref{sect:biax} utilizes the symmetries of $E_{a,b,b}$ to reduce our eigenvalue calculation problem to one for analytic families of ordinary differential equations, allowing us to apply the theory of Sturm-Liouville equations as well as make deductions on multiplicities.  The triaxial ellipsoid $E_{a,b,c}$ is the most difficult computationally, so Section \ref{sect:triax} and the appendix are focused in this direction.  The lack of rotation-invariance (in other words, an invariant $S^1$-action on $E_{a,b,c}$) obstruct most simplifications hence requiring a more in-depth analysis.

Our final section, namely Section \ref{sect:numerics}, is focused on verifying the accuracy of our analytic methods via numerics.  A combination of MATLAB
calculations as well as Laplace-Beltrami eigenfunction approximations on surfaces, as generated by code of Macdonald-Brandman-Rooth \cite{Mac}, are provided
for comparisons: these demonstrate that our analytic results are in fact accurate up to a designated, yet still high, order.  We also provide some simulations
that address the shapes of regions where eigenfunctions are non-zero (which go by the moniker of ``nodal domains" in the spectral geometry community) on
different ellipsoids $E_{a,b,c}$.}

\section{Analytic Perturbation Theory} \label{sect:analyt_pert}

Let $M$ be a compact, $n$-dimensional, smooth manifold without boundary.  Let $S(M)$ be the space of all $C^{\infty}$ symmetric covariant 2-tensors on $M$ and $\mathscr{M}$ the set of all $C^{\infty}$ Riemannian metrics on $M$.  Following the texts \cite{E71, GG}, we can put a Frechet norm on $S(M)$. Using this fact, \cite[Proposition 1.2]{BU83} gives a metric $\rho$ on $\mathscr{M}$ which will play a role in the statement of our following theorem although its precise form is not needed.

\begin{theo}[Berger's Lemma] \label{thm:berger_lem}
For $g \in \mathscr{M}$ and $h_i \in S(M)$ fixed for $i=1,\dots, N$, let $g(\epsilon) = g + \sum_{i=1}^N \epsilon^i h_i$ where $\epsilon < \epsilon_0(M,\{h\}_i)$.  Let $\Lambda$ be an eigenvalue of $-\Delta_g$ of multiplicity $l$.  Then $\mbox{spec}(-\Delta_{g(\epsilon)})$ consists of elements that have an analytic dependence in $\epsilon$ in the following way: Given $\Lambda$, there exists $\epsilon_1(M, \epsilon)$ along with $\Lambda_m(\epsilon) \in \R$ and $\psi_m(\epsilon) \in C^{\infty}(M)$, for $m \in \{1,\dots, l \}$, such that
\begin{enumerate}
\item $\Lambda_m$ and $\psi_m$ depend real-analytically on $\epsilon < \epsilon_1$, uniformly for each $m \in \{ 1, \dots, l \}$,
\item $\Lambda_j(0) = \Lambda$, for $m \in \{ 1, \dots, l \}$, and
\item $\{\psi_m(\epsilon)\}_{m=1}^l$ is orthonormal with respect to the inner product $\langle , \rangle_{g_{\epsilon}}$
\end{enumerate}
\end{theo}

This ``lemma" is originally due to Berger \cite{B73} however some gaps needed to be resolved and were filled by Bando-Urakawa \cite{BU83}.  Their own proofs though, albeit terse, heavily relied on various facts from the perturbation theory of eigenvalue problems, so we reproduce a sufficient number of arguments due to Rellich \cite{R69} for the following two reasons: 1) sake of completeness and 2) to re-illustrate the beautiful blend of ideas and formulas presented by Rellich.

The proof of Theorem \ref{thm:berger_lem} actually follows as an immediate consequence of a slightly more general result. However, we first give a necessary definition:

\begin{defn}
A family of metric $\{ g_{\epsilon} \}_{\epsilon} \subset \mathscr{M}$ depends \textit{real-analytically on $\epsilon$} if there exists a family $\{g_i\}_{i=0}^{\infty} \subset S(M)$ and an $\epsilon_0(M,\{g\}_i)$ such that $\sum_{i=0}^{\infty} \epsilon^i g_i$ converges to $g$ in the metric topology of $\mathscr{M}$, for all $\epsilon < \epsilon_0$.
\end{defn}

\begin{theo}(cf. \cite[Theorem 1]{BU83}]) \label{thm:main_tool}
Let $g_{\epsilon} \in \mathscr{M}$ be a one-parameter family of metrics depending real-analytically on $\epsilon < \epsilon_0$ with respect to the metric $\rho$ on $\mathscr{M}$, for some $\epsilon_0(M)>0$.  Let $\Lambda$ be an eigenvalue of $-\Delta_g$ of multiplicity $l$.   Then the spectrum $\mbox{spec}(-\Delta_{g(\epsilon)})$ consists of elements that have an analytic dependence in $\epsilon$ in the following way: for $\Lambda$,  there exists $\epsilon_1(M, \epsilon_0,\Lambda)$ along with $\Lambda_m(\epsilon) \in \R$ and $\psi_m(\epsilon) \in C^{\infty}(M) \cap L^2(M, g_{\epsilon})$, for $m \in \{1,\dots, l \}$, such that
\begin{enumerate}
\item $\Lambda_m$ and $\psi_m$ depend real-analytically on $\epsilon < \epsilon_0$ (with respect to their corresponding topologies), for each $m \in \{ 1, \dots, l \}$,
\item $\Lambda_j(0) = \Lambda$ and $\psi_m(0)$ is in the $-\Delta_g$-eigenspace associated to $\Lambda$, for $m \in \{ 1, \dots, l \}$, and
\item $\{\psi_m(\epsilon)\}_{m=1}^l$ is orthonormal with respect to the inner product $\langle , \rangle_{g_{\epsilon}}$
\end{enumerate}
\end{theo}

\subsection{Main tools and Rellich's Theorem}

The proof of this Theorem \ref{thm:main_tool} hinges upon the aforementioned robust and clever result of Rellich \cite{R69} and an auxilliary lemma about linear differential operators whose coefficients have an analytic dependence on a small parameter.  First, we start with a definition for the notion of real-analytic operators:

\begin{defn}[Real-analytic families of operators] \label{def:analytic}
For $\epsilon >0$, let $A(\epsilon) \in \mathscr{L}(H^{s_1}(M), \newline H^{s_0}(M))$, the Banach space of bounded operators from $H^{s_1}(M)$ to $H^{s_0}(M)$.  We say $A(\epsilon)$ is \textit{real-analytic in $\epsilon$} if there exists a $A_i \in \mathscr{L}(H^{s_1},H^{s_0})$ with the property that $A(\epsilon) = \sum_i \epsilon^i A_i$  and a  sequence of constants $\{a_i\}_i$ where $\|A_i\| \leq a_i$ such that
\begin{equation*}
\| A(\epsilon) \| \leq \sum_n \epsilon^n a_n < \infty.
\end{equation*}
\end{defn}

In fact, the definition goes both ways: starting off with the series expansion and finiteness of its norm, that $\mathcal{L}$ is Banach gives us that $A(\epsilon)$ is in fact an element of $\mathscr{L}(H^{s_1},H^{s_0})$.  We now give a technical lemma that is useful for analysis in coordinate charts:

\begin{lem}\label{lem:analytic_DO}
Let $U \subset \R^n$ be a coordinate chart on $M$.  Let $L_{\epsilon}$ be family of differential operators on $M$ which can locally be expressed in $U$ as
\begin{equation*}
L_{\epsilon} = \sum_{|\alpha| \leq m} a_{\alpha}(\epsilon, x) D_x^{\alpha}
\end{equation*}
where each $a_{\alpha}$ has a real-analytic dependence on $\epsilon < \epsilon_0(U)$, uniformly for $x \in U$.  Then the family of bounded operators $L_{\epsilon}: H^{m}(M) \rightarrow L^2(M)$ is real-analytic.
\end{lem}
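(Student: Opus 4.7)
The plan is to expand each coefficient as a convergent power series in $\epsilon$ uniformly in $x$, assemble the resulting operators $L_k$, verify operator-norm estimates term by term, and finally sum them into a convergent operator-valued series in $\mathscr{L}(H^m, L^2)$. First I would invoke the hypothesis of uniform real-analyticity: for each multi-index $\alpha$ with $|\alpha|\le m$ there exist constants $C_\alpha, r_\alpha > 0$ such that
\begin{equation*}
a_\alpha(\epsilon, x) = \sum_{k=0}^{\infty} \epsilon^{k} a_{\alpha,k}(x), \qquad \sup_{x \in U}|a_{\alpha,k}(x)| \le C_\alpha r_\alpha^{k},
\end{equation*}
with the convergence absolute and uniform on $U$ for $\epsilon < \epsilon_0(U)$. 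Taking $C = \max_\alpha C_\alpha$ and $r = \max_\alpha r_\alpha$, I obtain uniform bounds $\|a_{\alpha,k}\|_{L^\infty(U)} \le C r^{k}$.

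Next, I would define $L_k \eqd \sum_{|\alpha|\le m} a_{\alpha,k}(x)\, D_x^{\alpha}$. For $u \in H^m(M)$ supported in $U$, multiplication by the bounded function $a_{\alpha,k}$ is continuous on $L^2$ with operator norm at most $\|a_{\alpha,k}\|_{L^\infty}$, while $D_x^\alpha : H^m \to L^2$ has norm at most $1$ for $|\alpha| \le m$ by the very definition of the Sobolev norm. Consequently there is a constant $K_m$ (depending only on $m$ and $n$, via the number of multi-indices of length at most $m$) such that
\begin{equation*}
\|L_k u\|_{L^2(U)} \le K_m \cdot C r^{k} \cdot \|u\|_{H^m(U)},
\end{equation*}
so that $\|L_k\|_{\mathscr{L}(H^m, L^2)} \le K_m C r^{k}$. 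The series $\sum_k \epsilon^k L_k$ then converges absolutely in $\mathscr{L}(H^m, L^2)$ for $\epsilon < 1/r$. Uniform convergence of the coefficient series on $U$, combined with density of $C_c^\infty(U)$ in $H^m(U)$, identifies this limit with $L_\epsilon$, producing the required analytic expansion in the sense of Definition \ref{def:analytic}.

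To globalize from $U$ to $M$, I would choose a finite atlas $\{U_i\}_{i=1}^N$ (available by compactness of $M$) with a subordinate smooth partition of unity $\{\chi_i\}$, apply the preceding argument on each chart to obtain local expansions $L_\epsilon|_{U_i} = \sum_k \epsilon^k L_k^{(i)}$, and then write $L_\epsilon u = \sum_i L_\epsilon(\chi_i u) = \sum_i \sum_k \epsilon^k L_k^{(i)}(\chi_i u)$; since multiplication by the fixed smooth function $\chi_i$ is bounded $H^m(M) \to H^m(M)$ and the index set is finite, the resulting global series converges with a uniform geometric bound. The main obstacle I anticipate is the bookkeeping of uniform constants and the verification that the real-analyticity of the $a_\alpha$ in $\epsilon$ persists when one passes between overlapping charts; this is ensured because a smooth change of coordinates acts on the coefficients of a differential operator by smooth algebraic combinations, and so preserves real-analytic dependence on $\epsilon$ uniformly in $x$. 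Together these observations reduce the lemma to the local estimate above.
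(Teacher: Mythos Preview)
Your proposal is correct and fills in precisely the standard argument the paper has in mind; indeed, the paper's own ``proof'' of this lemma is simply the sentence ``We leave the proof as an easy exercise for the interested reader.'' Your expansion of the coefficients, termwise operator-norm estimate, and globalization via a finite partition of unity are exactly the routine steps this exercise asks for, so there is nothing substantive to compare.
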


\begin{proof}
We leave the proof as an easy exercise for the interested reader.
\end{proof}

It is important to note that in local coordinates $(x_1, \dots, x_n)$ on $M$, the coefficients of the Laplace-Beltrami operator are simply products of functions which are themselves analytic in $\epsilon$ thanks to our analyticity assumption on $g_{\epsilon}$ (this assumption itself implying analyticity for the coefficients of $g^{-1}$, which appear in the local coordinate expressions of $\Delta_{g_{\epsilon}}$, thanks to the analyticity of $\det^{-1}(g)$ and the adjugate matrix of $g$.)

Finally, we arrive at our main technical results in the theoretical portion of this article:

\begin{theo}[Rellich's Theorem] \label{thm:rellich}
Let $I$ be an interval containing 0.  Let $s_1 > s_0 \geq 0$ be integers.  Let $A_{\epsilon}$ be a real-analytic family of bounded operators mapping from $H^{s_1}$ to $H^{s_0}$ with $A_0 =: A$.  Assume that
\begin{enumerate}
\item each operator $A_{\epsilon}$, $\epsilon \in I$, is self-adjoint with domain $H^{s_1}$ but with respect to the inner product $\langle \, , \, \rangle_{s_0}$.  In other words, $A_{\epsilon}$ is a densely defined unbounded operator on $H^{s_0}(M)$ and has $D(A_{\epsilon}) = D(A_{\epsilon}^*)$,
\item $A_0$ is a positive operator on its domain, and
\item $\Lambda$ is an eigenvalue of $A_0$ with multiplicity $l$ that is also isolated in the spectrum, that is there exists $\delta(\Lambda) > 0$ such that $spec(A) \cap [-\delta + \Lambda, \delta + \Lambda] = \{ \Lambda\}$.
\end{enumerate}

Then there exists $I' \subset I$ containing 0,  $l$ real-analytic families of eigenvalues $\{ \Lambda_m(\epsilon) \}_{m=1}^l$ and eigenvectors $\{ \psi_m(\epsilon) \}_{m=1}^l$ of $A_{\epsilon}$ for $\epsilon \in I'$ such that
\begin{itemize}
\item $\Lambda_j(0) = \Lambda$, for $m \in \{ 1, \dots, l \}$,
\item $\{\psi_m(\epsilon)\}_{m=1}^l$ is orthonormal with respect to the inner product $\langle , \rangle_{s_0}$ for all $\epsilon \in I'$, and moreover,
\item given any $d_1, d_2 < \delta$, there exists $I'' \subset I'$ such that for all $\epsilon \in I''$, $\mbox{spec}(A_{\epsilon}) \cap  [-d_1 + \Lambda, d_2 + \Lambda] = \{ \Lambda_m(\epsilon) \}_{m=1}^l$.
\end{itemize}
\end{theo}

A similar statement can be found in the classic texts of Kato \cite{K} and Riesz-Nagy \cite{RN}. Note also that a priori, our intervals $I'$ and $I''$ depend on $\Lambda$ therefore making this result inherently non-uniform across the entirety of $\mbox{spec}(A_0)$.

In fact, by applying Theorem \ref{thm:rellich} to each element of the spectrum of $A$ below a fixed threshold $L$ say, and carefully choosing the intervals $I''$ we then immediately have the
\begin{cor} \label{cor:all_analyt}
Given $L>0$, there exists $\epsilon_0(L)$ such that $\mbox{spec}(A(\epsilon)) \cap [0, L]$ consists entirely of analytic eigenvalues as described in Theorem \ref{thm:rellich}.
\end{cor}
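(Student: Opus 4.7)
The plan is to reduce the global statement to a finite collection of pointwise applications of Theorem \ref{thm:rellich}, stitched together by a uniform resolvent argument that rules out spurious eigenvalues in the spectral gaps. The hypotheses on $A_0$ (positivity, self-adjointness, domain $H^{s_1}$ with $s_1>s_0$), combined with the compact embedding $H^{s_1}\hookrightarrow H^{s_0}$ on the compact manifold $M$, force $A_0$ to have compact resolvent, so $\mbox{spec}(A_0)\cap[0,L+1]$ is a finite list $\Lambda_1<\Lambda_2<\cdots<\Lambda_N$ of eigenvalues with finite multiplicities $l_1,\ldots,l_N$. First I would pick $L'\in(L,L+1]$ in a spectral gap and choose $\delta>0$ strictly smaller than half of every consecutive gap among $0,\Lambda_1,\ldots,\Lambda_N,L'$, so that the closed windows $W_j:=[\Lambda_j-\delta,\Lambda_j+\delta]$ are pairwise disjoint subsets of $[0,L']$.

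Applying Theorem \ref{thm:rellich} at each $\Lambda_j$ (with Rellich's $\delta$ taken slightly larger than the window $\delta$, so that $d_1=d_2=\delta$ is admissible) yields intervals $I''_j\ni 0$ and real-analytic families $\{\Lambda_j^{(m)}(\epsilon),\psi_j^{(m)}(\epsilon)\}_{m=1}^{l_j}$ whose values account for all of $\mbox{spec}(A(\epsilon))\cap W_j$ for $\epsilon\in I''_j$. Setting $\epsilon_0'$ to be the minimum of the positive right endpoints of these finitely many intervals guarantees that all $N$ families are simultaneously defined and correctly describe $\mbox{spec}(A(\epsilon))\cap\bigcup_j W_j$ for $\epsilon\in[0,\epsilon_0')$. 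The remaining, and genuinely nontrivial, step is to show that no eigenvalues of $A(\epsilon)$ migrate into the compact complement $K:=[0,L']\setminus\bigcup_j W_j$ for small $\epsilon$. For this I would use a uniform Neumann-series argument: since $K$ is a compact subset of the resolvent set of $A_0$, the map $\lambda\mapsto(A_0-\lambda)^{-1}\in\mathscr{L}(H^{s_0},H^{s_1})$ is continuous and hence uniformly bounded on $K$ by some constant $C$, and the factorization
\begin{equation*}
A(\epsilon)-\lambda=(A_0-\lambda)\bigl[I+(A_0-\lambda)^{-1}(A(\epsilon)-A_0)\bigr]
\end{equation*}
combined with the real-analytic expansion $A(\epsilon)-A_0=\sum_{i\ge 1}\epsilon^i A_i\in\mathscr{L}(H^{s_1},H^{s_0})$ delivers invertibility of $A(\epsilon)-\lambda$ whenever $C\,\|A(\epsilon)-A_0\|<1$. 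This last inequality is uniform in $\lambda\in K$, so for $\epsilon$ below some $\epsilon_0\le\epsilon_0'$ we conclude $\mbox{spec}(A(\epsilon))\cap K=\emptyset$ and hence $\mbox{spec}(A(\epsilon))\cap[0,L]\subset\bigcup_j W_j$.

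The main obstacle is precisely this gap step. The eigenvalue families produced by Theorem \ref{thm:rellich} each describe the spectrum only inside their own window $W_j$, and a priori the perturbation could insert fresh eigenvalues into the gaps; the resolvent argument above neutralises this possibility, but requires care with the mixed two-Sobolev-space operator norm for $A(\epsilon)-A_0$ and with the continuity of $(A_0-\lambda)^{-1}$ across $K$. Once this uniform estimate is in hand, the corollary follows immediately by restricting to those $\Lambda_j^{(m)}(\epsilon)$ that fall in $[0,L]$; each of these is, by construction, one of the analytic families of Theorem \ref{thm:rellich}.
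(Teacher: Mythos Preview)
Your argument is correct and is essentially the approach the paper has in mind, only spelled out in more detail. The paper's own justification is a one-line sketch: apply Theorem~\ref{thm:rellich} at each of the finitely many eigenvalues of $A_0$ in $[0,L]$ and take the intersection of the resulting intervals $I''$. The only substantive difference is how the spectral gaps are handled. The paper intends the third conclusion of Theorem~\ref{thm:rellich} to do this work---by choosing $d_1,d_2$ close to the full isolation distance $\delta$ at each $\Lambda_j$, the windows $[\Lambda_j-d_1,\Lambda_j+d_2]$ can be made to overlap and cover $[0,L]$, so no separate gap argument is needed. You instead take \emph{disjoint} windows and supply an explicit resolvent/Neumann-series estimate on the complement $K$. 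This is perfectly valid and in fact is the same mechanism that underlies the paper's proof of the third bullet of Rellich's theorem (the shifted-operator argument there is a disguised resolvent bound). Your version is more self-contained and sidesteps a minor awkwardness in the paper's formulation, namely that the symmetric constraint $d_1,d_2<\delta$ can make overlapping windows delicate when consecutive gaps differ greatly in size; your direct resolvent bound on $K$ avoids this entirely.
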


\begin{proof}[Proof of Theorem \ref{thm:main_tool} using Lemma \ref{lem:analytic_DO} and Theorem \ref{thm:rellich}]
The proof is almost immediate after considering the following well-known isometry between $L^2(M,g_{\epsilon})$ and $L^2(M,g)$ which we give in local-coordinate form:
\begin{equation*}
U_{\epsilon}(f)(x) = \sqrt{\frac{\det g(x)}{\det g_{\epsilon}(x)}} \, f(x).
\end{equation*}
Thanks to the analyticity of $\sqrt{\frac{\det g(x)}{\det g_{\epsilon}(x)}}$, it follows immediately that $U_{\epsilon} \Delta_{g_{\epsilon}} U_{\epsilon}^{-1} + I$ satisfies the hypotheses of Lemma \ref{lem:analytic_DO} and is a bounded family of operators from $H^2(M, g_{\epsilon})$ to $L^2(M, g)$ where $s_1=2$ and $s_0=0$. Furthermore, these operators are densely defined on $L^2(M)$, self-adjoint, and positive, thus satisfying the hypotheses of Theorem \ref{thm:rellich}.

We conclude the proof by noting that for $\tilde{\psi}_{m}(\epsilon)$ an eigenvector of $U_{\epsilon} \Delta_{g_{\epsilon}} U_{\epsilon}^{-1} + I$ as per the conclusions of Theorem \ref{thm:rellich}, $\psi_m(\epsilon) := U_{\epsilon}^{-1} \tilde{\psi}_{m}(\epsilon)$ gives us the desired eigenvectors corresponding to $g_{\epsilon}$. Hence,
\begin{equation*}
\Lambda_m(\epsilon) = \langle \left( U_{\epsilon} \Delta_{g_{\epsilon}} U_{\epsilon}^{-1} + I \right) \tilde{\psi}_{m}(\epsilon), \tilde{\psi}_{m}(\epsilon) \rangle_{g_0}
\end{equation*}
which itself admits an power series expansion therefore verifying the analyticity.  The last step is to just shift the spectrum by -1.
\end{proof}

\subsection{Proof of Theorem \ref{thm:rellich}: Some technical statements}

The idea behind the proof of Rellich's Theorem is both natural and computational in nature, however there are a number of moving parts that we must carefully identify in a top-down format. Throughout this section, we consider assumptions (1)-(3) in the statement of Theorem \ref{thm:rellich}.

For the sake of simplicity, as our operators of interest are themselves Laplace-Beltrami operators corresponding to perturbed metrics $g_{\epsilon}$, we assume that $A_{\epsilon}$ admits a discrete, non-negative spectrum and that $\mbox{Dom}(A_{\epsilon}) = \mbox{Dom}(A_0) = H^{2}(M, dV_{g_0})$.  Now, set $B_{\epsilon} := A_0 - A_{\epsilon}$ and $\mu(\epsilon) := \Lambda - \Lambda(\epsilon)$ where $A_{\epsilon} \psi(\epsilon) = \Lambda(\epsilon) \psi(\epsilon)$.  This leads us to the following series of simple lemmas:

\begin{lem}[Restriction and matrix identity] \label{lem:restriction_identity}
Let $\phi_{\Lambda}$ be a eigenvector of eigenvalue $\Lambda$ and $\psi(\epsilon)$ an eigenvector of $A_{\epsilon}$.  We have
\begin{align*}
(A_0 - \Lambda) \psi(\epsilon) = (B_{\epsilon} - \mu(\epsilon)) \psi(\epsilon)
\end{align*}
and hence $\langle (A - \Lambda) \phi_{\Lambda}, \psi(\epsilon) \rangle = \langle \phi_{\Lambda}, (B_{\epsilon} - \mu(\epsilon)) \psi(\epsilon) \rangle = 0 $ by self-adjointness with domain $H^{2}$.
\end{lem}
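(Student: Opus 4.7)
The plan splits naturally into two parts: a purely algebraic operator identity, followed by a short pairing argument that uses the self-adjointness hypothesis (1) of Theorem \ref{thm:rellich}.

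For the first identity, I would start from the very definition $B_{\epsilon} = A_0 - A_{\epsilon}$, rewritten as $A_0 = A_{\epsilon} + B_{\epsilon}$, and apply both sides to $\psi(\epsilon)$. Using the eigenvalue equation $A_{\epsilon}\psi(\epsilon) = \Lambda(\epsilon)\psi(\epsilon)$ yields $A_0\psi(\epsilon) = \Lambda(\epsilon)\psi(\epsilon) + B_{\epsilon}\psi(\epsilon)$. Subtracting $\Lambda\psi(\epsilon)$ from both sides and invoking the definition $\mu(\epsilon) = \Lambda - \Lambda(\epsilon)$ gives precisely $(A_0 - \Lambda)\psi(\epsilon) = (B_{\epsilon} - \mu(\epsilon))\psi(\epsilon)$. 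No analysis enters this step; it is just bookkeeping.

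For the inner product chain, I would first observe that the leftmost expression $\langle (A-\Lambda)\phi_{\Lambda}, \psi(\epsilon)\rangle$ is trivially zero, since $\phi_{\Lambda}$ lies in the kernel of $A_0 - \Lambda$ by assumption. To connect this to the right-hand side, I would appeal to the self-adjointness of $A_0$ on the fixed domain $H^{2}(M, dV_{g_0})$ with respect to $\langle \cdot, \cdot \rangle_{s_0}$ (here $s_0 = 0$), which is hypothesis (1) of Theorem \ref{thm:rellich} at $\epsilon = 0$. Since both $\phi_{\Lambda}$ and $\psi(\epsilon)$ sit in this common domain, we may transpose $A_0 - \Lambda$ onto $\psi(\epsilon)$, obtaining $\langle \phi_{\Lambda}, (A_0 - \Lambda)\psi(\epsilon)\rangle$, and then substitute the operator identity from the previous paragraph to land on $\langle \phi_{\Lambda}, (B_{\epsilon} - \mu(\epsilon))\psi(\epsilon)\rangle$. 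All three expressions in the chain are therefore equal to zero.

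There is no genuine obstacle in this lemma; the point to keep in mind is that, thanks to the simplifying assumption $\mathrm{Dom}(A_{\epsilon}) = \mathrm{Dom}(A_0) = H^{2}(M, dV_{g_0})$ made immediately before the statement, the self-adjointness transposition happens inside a single fixed Hilbert space, so no $\epsilon$-dependent domain issues intervene and no boundary terms appear (recall $M$ is closed). The substantive content of the lemma is really the operator identity, which will serve as the algebraic workhorse for extracting the coefficients of the perturbative expansion of $\Lambda(\epsilon)$ in the subsequent steps.
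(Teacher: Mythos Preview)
Your proposal is correct and matches the paper's intended argument; the paper in fact states this lemma without a separate proof, treating both the operator identity and the pairing conclusion as immediate from the definitions of $B_{\epsilon}$, $\mu(\epsilon)$, and the self-adjointness hypothesis. Your write-up simply spells out explicitly the one-line algebra and the transposition step that the paper leaves implicit.
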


\begin{lem}[Pseudo inverses] \label{lem:pseudo_inv}
Let $A$ be our self-adjoint, unbounded operator on $L^2(M)$ and $\delta$ be as in the hypotheses Theorem \ref{thm:rellich}.  There exists $R \in \mathscr{L}(L^2) \cap \mathscr{L}(H^{2})$ such that
\begin{itemize}
\item $R \, \Pi_{E_{\Lambda}} = 0$.
\item $R \, (A - \Lambda) =  (A - \Lambda) \, R = I - \Pi_{E_{\Lambda}} $
\end{itemize}
\end{lem}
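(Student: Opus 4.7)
The plan is to construct $R$ directly via the spectral decomposition of the self-adjoint operator $A$. Under the standing assumptions of this subsection, $A$ has discrete, non-negative spectrum on $L^2(M)$ with $\mbox{Dom}(A)=H^2(M)$, so I would fix an $L^2$-orthonormal eigenbasis $\{\phi_k\}_{k \ge 1}$ with eigenvalues $\{\lambda_k\}$ repeated with multiplicity, ordered so that $\phi_1,\ldots,\phi_l$ span $E_\Lambda$ and $\lambda_k \neq \Lambda$ for all $k \ge l+1$. The isolation hypothesis in Theorem \ref{thm:rellich} guarantees $|\lambda_k - \Lambda| \ge \delta$ whenever $k \ge l+1$. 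With this data I would set
\[ R f \;:=\; \sum_{k \ge l+1} \frac{1}{\lambda_k - \Lambda} \, \langle f, \phi_k \rangle_{L^2} \, \phi_k, \qquad f \in L^2(M). \]

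The two algebraic identities are immediate from this construction. Since only indices $k \ge l+1$ contribute to $R$, one has $R\Pi_{E_\Lambda} = 0$; and expanding $(A-\Lambda)f = \sum_k (\lambda_k-\Lambda)\langle f,\phi_k\rangle \phi_k$ for $f \in \mbox{Dom}(A)$, together with a parallel computation on $Rf$, shows that both $R(A-\Lambda)$ and $(A-\Lambda)R$ reduce to $\sum_{k \ge l+1}\langle f,\phi_k\rangle \phi_k = (I-\Pi_{E_\Lambda})f$ on the appropriate domains.

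The real content of the lemma lies in the operator bounds. For the $L^2$ estimate, Parseval combined with $|\lambda_k-\Lambda|\ge\delta$ gives $\|Rf\|_{L^2}^2 \le \delta^{-2}\|f\|_{L^2}^2$ with no further effort. For the $H^2$ claim I would prove the stronger smoothing statement that $R \colon L^2(M) \to H^2(M)$ is bounded, from which $R \in \mathscr{L}(H^2)$ follows via the continuous inclusion $H^2 \hookrightarrow L^2$. The key algebraic manipulation is the decomposition $\lambda_k/(\lambda_k-\Lambda) = 1 + \Lambda/(\lambda_k-\Lambda)$, which term-by-term rearranges into
\[ A R f \;=\; (I-\Pi_{E_\Lambda})f + \Lambda\, R f, \]
yielding $\|ARf\|_{L^2} \le (1+|\Lambda|\delta^{-1})\|f\|_{L^2}$.

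The main obstacle is the bridge from these two $L^2$-bounds on $Rf$ and $ARf$ to the desired $H^2$-bound on $Rf$. Here I would invoke the standard elliptic regularity estimate $\|u\|_{H^2} \lesssim \|u\|_{L^2} + \|Au\|_{L^2}$ on $\mbox{Dom}(A)$, which is applicable because $A$ is a second-order uniformly elliptic operator on the closed manifold $M$; in the application via Theorem \ref{thm:main_tool} one has $A = U_\epsilon \Delta_{g_\epsilon} U_\epsilon^{-1}+I$, which stays elliptic for $\epsilon$ small. Combining this with the previous step gives $\|Rf\|_{H^2} \lesssim \|f\|_{L^2}$, delivers $R \in \mathscr{L}(L^2) \cap \mathscr{L}(H^2)$, and closes the construction.
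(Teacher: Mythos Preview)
Your construction is correct and is essentially the same as the paper's: both define $R$ via the spectral resolution of $A$, the paper writing it as a spectral integral $R=\int_{\{|\sigma-\Lambda|\ge\delta\}}(\sigma-\Lambda)^{-1}\,dE_\sigma$ while you write out the discrete eigenbasis sum explicitly (the paper also notes the spectrum is discrete here, so these are the same object). The only difference is emphasis: the paper dispatches boundedness in one line by observing that the spectral multiplier $(\sigma-\Lambda)^{-1}$ is bounded by $\delta^{-1}$, whereas you work out the $H^2$ bound in detail via the identity $ARf=(I-\Pi_{E_\Lambda})f+\Lambda Rf$ together with elliptic regularity. Your version is more carefully justified on this point and in fact proves the slightly stronger statement $R\colon L^2\to H^2$.
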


\begin{proof}
We invoke spectral calculus for unbounded operators and denote by $E_{\sigma}$ the spectral measure for $A$ (which is in our case discrete).  Then we can set
\begin{equation*}
R = \int_0^{\Lambda - \delta} \frac{1}{\sigma - \Lambda} \, dE_{\sigma} + \int_{\Lambda + \delta}^{\infty} \frac{1}{\sigma - \Lambda} \, dE_{\sigma}.
\end{equation*}
The boundedness follows from spectral multiplier taking values less than $\delta^{-1}$ on the spectrum.
\end{proof}

Notice that for $\psi(\epsilon)$ an eigenfunction of $A_{\epsilon}$, we have the orthogonal decomposition $\psi(\epsilon) = \overline{\psi}(t) + R(B_{\epsilon} - \mu(\epsilon)) \psi(\epsilon)$ where $\overline{\psi}(t) \in E_{\Lambda}$; this follows immediately from a combination of Lemma \ref{lem:restriction_identity} and Lemma \ref{lem:pseudo_inv}.  Iterating this expression, if we have a convergent operator series, allows us to express $\psi(\epsilon)$ completely in terms of data coming from $E_{\Lambda}$.  This notion motivates the main idea of Rellich's proof as seen through the following natural generalization of this series.

\begin{lem}[Neumann series for $\psi(\epsilon)$] \label{lem:neumann}
Let $\mu$ be a free parameter and $B_{\epsilon} = \sum_{i=1} \epsilon^i A_i$ be analytic in $\epsilon$ in the sense of Definition \ref{def:analytic}, with each of its terms $A_i$ having norm $\|A_i\|_{op} \leq c_0$ where $c_0>0$ is fixed.  Consider $\overline{\psi} \in E_{\Lambda}$ and set $S(\epsilon) = R \circ (B_{\epsilon} - \mu)$ with $\|R\|=r_0$.

Then there exists $\epsilon_0, \mu_0$ small enough such that for $|\epsilon| < \epsilon_0$ and $|\mu| < \mu_0$ , we have that
\begin{equation} \label{eqn:putative_eigen}
\psi(\epsilon) = \sum_{n=0}^{\infty} S(\epsilon)^n (\overline{\psi}(\epsilon))
\end{equation}
exists in $L^{2}$.
\end{lem}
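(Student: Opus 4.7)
The approach is the classical Neumann series argument: bound the operator norm of $S(\epsilon)$ strictly below $1$ for $|\epsilon|$ and $|\mu|$ small, and then invoke the geometric series for bounded operators on a Banach space. The key preliminary is to identify the correct function space on which to let $S(\epsilon)$ act. By Lemma \ref{lem:pseudo_inv} and elliptic regularity, the pseudo-inverse $R$ is bounded $L^{2}\to H^{2}$ (gaining two orders of regularity), while $B_{\epsilon} = A_{0} - A_{\epsilon}$ is a difference of second-order operators and hence bounded $H^{2}\to L^{2}$; consequently $S(\epsilon) = R\circ(B_{\epsilon}-\mu)$ is a bounded endomorphism of $H^{2}$, so the iterates $S(\epsilon)^{n}$ and the Neumann series live naturally in $\mathscr{L}(H^{2},H^{2})$.

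The first concrete step is to estimate $\|B_{\epsilon}\|_{H^{2}\to L^{2}}$. From the expansion $B_{\epsilon} = \sum_{i=1}^{\infty}\epsilon^{i}A_{i}$ and the uniform bound $\|A_{i}\|_{op}\leq c_{0}$, the triangle inequality yields
\begin{equation*}
\|B_{\epsilon}\|_{H^{2}\to L^{2}} \;\leq\; c_{0}\sum_{i=1}^{\infty}|\epsilon|^{i} \;=\; \frac{c_{0}\,|\epsilon|}{1-|\epsilon|}
\end{equation*}
for $|\epsilon|<1$. Using the continuous embedding $H^{2}\hookrightarrow L^{2}$ we also have $\|\mu I\|_{H^{2}\to L^{2}}\leq |\mu|$, so composition with $R$ gives
\begin{equation*}
\|S(\epsilon)\|_{H^{2}\to H^{2}} \;\leq\; r_{0}\left(\frac{c_{0}\,|\epsilon|}{1-|\epsilon|}+|\mu|\right).
\end{equation*}

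The second step is to choose $\epsilon_{0}$ and $\mu_{0}$ small enough that the right-hand side is strictly less than, say, $\tfrac{1}{2}$. For $|\epsilon|<\epsilon_{0}$ and $|\mu|<\mu_{0}$, the geometric-series criterion yields convergence of $\sum_{n=0}^{\infty}S(\epsilon)^{n}$ in $\mathscr{L}(H^{2},H^{2})$ to the resolvent $(I-S(\epsilon))^{-1}$. Applying this operator to $\overline{\psi}\in E_{\Lambda}$, which is a smooth eigenfunction of the elliptic operator $A_{0}$ and therefore belongs to $H^{2}$, produces a well-defined element of $H^{2}$, and thus of $L^{2}$ via the continuous embedding. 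This realizes the series (\ref{eqn:putative_eigen}).

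The main obstacle is really the preliminary one, namely identifying the space in which $S(\epsilon)$ iterates meaningfully: $B_{\epsilon}$ cannot be regarded as a bounded operator on $L^{2}$, so running the Neumann series there is not possible, and one must exploit the two-order smoothing of $R$ to get a bounded endomorphism of $H^{2}$. Once this is in place, no finer spectral input is needed and the argument collapses to the elementary estimate $\|S(\epsilon)\|<1$; the deeper content of Rellich's theorem (namely that $\mu$ and $\overline{\psi}$ can actually be chosen so that $\psi(\epsilon)$ is an eigenvector with analytic dependence) will arise later, in the subsequent Lyapunov--Schmidt-type reduction using this Neumann series as raw material.
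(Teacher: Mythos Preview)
Your proof is correct and follows the same Neumann-series approach as the paper: bound $\|S(\epsilon)\| \leq r_0\bigl(|\mu| + c_0\sum_{i\geq 1}|\epsilon|^i\bigr) < 1$ for small $\epsilon,\mu$ and conclude convergence of the geometric operator series. You are in fact more careful than the paper about the function space in which $S(\epsilon)$ iterates---observing that $R$ smooths $L^2\to H^2$ (a consequence of elliptic regularity not explicitly recorded in Lemma~\ref{lem:pseudo_inv}) so that $S(\epsilon)\in\mathscr{L}(H^2,H^2)$---whereas the paper leaves this point implicit.
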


This element $\psi(\epsilon)$ will be shown to be our desired eigenfunction for $A_{\epsilon}$.  It should be noted that the expression for this putative eigenfunction involves only information from a fixed eigenspace for $A_0$, namely $E_{\Lambda}$.  Also, the reason for having only a single number $c_0$ bounding the norms of our operator-valued coefficients is because we only deal with 2nd-order differential operators on $M$, therefore yielding only 2nd-order operators with uniformly bounded real coefficients.

\begin{proof}[Proof of Lemma \ref{lem:neumann}]
We only need to verify that $\sum_{n=0}^{\infty} S(t)^n$ has a small norm for $t$ sufficiently small.  Hence, we must further bound $\sum_{n=1}^{\infty} \|R \circ (B_{\epsilon} - \mu ) \|^n$, which in turn leads us to bounding  $\|R \circ (B_{\epsilon} - \mu(\epsilon)) \|^n \leq \|R\|^n \cdot \| (B_{\epsilon} - \mu(\epsilon)) \|^n$.  If we choose $t_0,\mu_0>0$ small enough so that
\begin{equation*}
\|R \circ (B_{\epsilon} - \mu(\epsilon)) \| \leq r_0 \left( \mu_0 + \sum_{i=1}^{\infty} \epsilon^i c_0 \right) < 1,
\end{equation*}
we then have a convergent Neumann series.  This completes our proof.
\end{proof}

\begin{rem}
In our case of Laplace-Beltrami operators arising as perturbations from that on $S^2$, we can take $r_0 = 1$ as the spectral gaps $\delta$ can be taken to be as greater than or equal to $2$ always.
\end{rem}

\subsection{Proof of Theorem \ref{thm:rellich}: Weierstrass factorization and final steps}

Let us massage our series representation for the putative eigenfunction $\psi(\epsilon)$ in equation (\ref{eqn:putative_eigen}) a bit further, under the assumption that $\mu(\epsilon)$ is sufficiently small.  To provide some motivation, let $E_{\Lambda} = \mbox{span} \{ \phi_{\Lambda,m} \}_{m=1}^l$ and suppose that $\psi(\epsilon)$ is \textit{actually} an eigenvector $A_{\epsilon}$.  Then we know $\Pi_{E_{\Lambda}} \psi(\epsilon) = \sum_{m'=1}^l c_{m'}(\epsilon) \phi_{\Lambda,m'}$ for some values $c_{m'}(\epsilon)$; plugging in this linear combination into the series representation of $R(B_{\epsilon} - \mu)$ with the identity in Lemma \ref{lem:restriction_identity}, gives
\begin{equation} \label{eqn:first_solve}
\sum_{m'=1}^l c_{m'}(\epsilon) \langle \phi_{\Lambda, m},  (B_{\epsilon} - \mu(\epsilon)) \sum_{n=0}^{\infty} S^n \phi_{\Lambda,m'} \rangle = 0
\end{equation}
for each $m = 1, \dots, l$.

This set of equations (\ref{eqn:first_solve}) s commonly referred to as the ``first solvability conditions" in asymptotics.  Rellich's idea was to remove the dependence of $\mu$ on $\epsilon$ and treat it as an independent variable.  If we can solve these equations in $c_{m'}$ for sufficiently small $\mu$, then the theory of zeroes of analytic functions return our desired eigenvalues and eigenvectors.

We label $v_{m,m'}(\epsilon) := \langle \phi_{\Lambda, m}, (B_{\epsilon} - \mu) \sum_{n=0}^{\infty} S^n \phi_{\Lambda,m'} \rangle$; note that $v_{m,m'}(\epsilon) = \overline{v_{m',m}(\epsilon)}$ and hence the corresponding $l\times l$ matrix is Hermitian. We lift these functions into $\R^2_{\mu, \epsilon}$ and consider the resulting determinant of
\begin{equation} \label{eqn:spectral_det}
F(\mu, \epsilon) : = \det \left( v_{m,m'}(\epsilon)  \right)_{m,m'}
\end{equation}
For $|\epsilon| < \epsilon_0$ and $|\mu| < \mu_0$, we know that $F(\mu, \epsilon)$ is an analytic function in $\mu$ and $\epsilon$.  Notice that in the special case of $\mu=\widetilde{\mu}(\epsilon) = \Lambda - \Lambda(\epsilon)$, and if $|\mu(\epsilon)| < \mu_0$, then $F(\mu(\epsilon), \epsilon) = 0$ thus showing the existence of non-zero projection coefficients of $\overline{\psi}(t)$.

It is through the Weierstrass Preparation Theorem and that we can show the existence of such sufficiently small $\mu(\epsilon)$, and therefore $\Lambda(\epsilon)$, as encapsulated by the following proposition:
\begin{prop} \label{prop:weier_prep}
Consider the analytic function $F(\mu, \epsilon)$ defined in (\ref{eqn:spectral_det}).  We have that $F(\mu,0) = \mu^l$ and therefore
\begin{equation*}
F(\mu,\epsilon) = \left( \mu^l + p_{l-1}(\epsilon) \mu^{l-1} + \dots + p_1(\epsilon) \mu + p_0(\epsilon) \right) \times E(\mu,\epsilon)
\end{equation*}
where each $p_i(\epsilon)$ is analytic in $\epsilon$ and $E(\mu,\epsilon)$ is a non-vanishing analytic function in a rectangle $I(\mu_0)$.
\end{prop}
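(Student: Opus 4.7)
The plan is to prove the proposition by first computing $F(\mu,0)$ explicitly and then invoking the Weierstrass Preparation Theorem from several complex variables. For the first step, I would observe that at $\epsilon=0$ the perturbation operator $B_0 = A_0 - A_0$ vanishes, so $S(\epsilon) = R \circ (B_{\epsilon} - \mu)$ specializes to $S(0) = -\mu R$. The crucial structural input from Lemma \ref{lem:pseudo_inv} is that $R$ annihilates the eigenspace $E_\Lambda$, so $S(0)\phi_{\Lambda,m'} = 0$ for each basis vector and the Neumann series of Lemma \ref{lem:neumann} collapses to $\sum_{n=0}^\infty S(0)^n \phi_{\Lambda,m'} = \phi_{\Lambda,m'}$. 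Substituting into the definition of $v_{m,m'}$ then gives $v_{m,m'}(0) = \langle \phi_{\Lambda,m}, -\mu\, \phi_{\Lambda,m'}\rangle = -\mu\, \delta_{mm'}$, so the $l\times l$ matrix is $-\mu I$ and $F(\mu,0) = (-\mu)^l$; the harmless sign $(-1)^l$ can be absorbed into the unit $E$, giving the stated $\mu^l$.

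Next, to set up the Weierstrass Preparation Theorem, I need joint analyticity of $F$ in a bidisk about $(0,0)$. This follows because the operator-valued map $(\mu,\epsilon) \mapsto S(\epsilon)$ is analytic in the sense of Definition \ref{def:analytic} on the region prescribed by Lemma \ref{lem:neumann}, the Neumann series $\sum_{n=0}^\infty S(\epsilon)^n$ converges there in operator norm, and pairing with a fixed $\phi_{\Lambda,m}$ after applying $(B_\epsilon - \mu)$ preserves analyticity. Each entry $v_{m,m'}(\epsilon)$ is therefore jointly analytic in $(\mu,\epsilon)$, and so is the determinant $F$. Armed with the vanishing profile $F(\mu,0) = c\,\mu^l$ with $c\neq 0$, the classical Weierstrass Preparation Theorem produces a distinguished polynomial $W(\mu,\epsilon) = \mu^l + p_{l-1}(\epsilon)\mu^{l-1} + \dots + p_0(\epsilon)$ with analytic coefficients $p_i(\epsilon)$ satisfying $p_i(0)=0$, together with a nowhere-vanishing analytic unit $E(\mu,\epsilon)$ on a smaller bidisk $I(\mu_0)$, such that $F = W \cdot E$.

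The main obstacle is conceptual rather than computational: it is the verification that $F(\mu,0)$ is a pure power $\mu^l$ rather than some more degenerate expression. This is exactly where the structure of the pseudo-inverse $R$ (its image lies in $E_\Lambda^\perp$, so $R|_{E_\Lambda} = 0$) interacts with Rellich's device of treating $\mu$ as a free variable independent of $\epsilon$. Had we tied $\mu$ to $\epsilon$ from the outset, $F$ would be a function of one variable and identifying its order of vanishing would be considerably more delicate; the decoupling is precisely what makes the bivariate Weierstrass preparation directly applicable and, ultimately, what will produce the $l$ analytic branches of eigenvalues needed to complete the proof of Theorem \ref{thm:rellich}.
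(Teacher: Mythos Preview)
Your proposal is correct and follows essentially the same route as the paper, which simply cites the Malgrange--Weierstrass Preparation Theorem once the vanishing profile $F(\mu,0)=\mu^l$ is in hand. You in fact supply more detail than the paper does: the explicit verification that $B_0=0$, $S(0)\phi_{\Lambda,m'}=0$, and hence $v_{m,m'}(0)=-\mu\,\delta_{mm'}$ (with the sign absorbed into the unit) is left implicit in the paper's one-line proof.
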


\begin{proof}
This is an immediate consequence of the Malgrange-Weierstrass Preparation Theorem with the derivative conditions applied in $\mu$.  For its rigorous statement, see the treatise by Guillemin-Golubitsky \cite[Chapter 4]{GG}.
\end{proof}

\begin{proof}[Proof that Proposition \ref{prop:weier_prep} implies Theorem \ref{thm:rellich}]
Take $\epsilon_0$ and $\mu_0$ small enough to satisfy the hypotheses of Proposition \ref{prop:weier_prep}.  Thanks to the corresponding matrix for the determinant $F$ being Hermitian, $F$ is itself real and we have a complete factorization of
\begin{equation*}
\left( \mu^l + p_{l-1}(\epsilon) \mu^{l-1} + \dots + p_1(\epsilon) \mu + p_0(\epsilon) \right)
\end{equation*}
into monomials of the form $\mu-\widetilde{\mu}_{m'}(\epsilon)$ for $m'=1,\dots,l$ where $|\mu-\widetilde{\mu}_{m'}(\epsilon)| \in [0, \mu_0)$ and $\epsilon\in [0,\epsilon_0)$.  By analyticity of $F(\mu, \epsilon)$, it follows that $\widetilde{\mu}_m(\epsilon)$ is analytic for all $m'$.

Notice that $F(\tilde{\mu}_m(\epsilon), \epsilon) = 0$, implying there exists non-trivial solution vectors $\overrightarrow{c}(\epsilon) \in \R^{l}$ where
\begin{equation} \label{eqn:solut_solvab}
\sum_{m'=1}^l c_{m'}(t) v_{m,m'}(\epsilon) = 0
\end{equation}
for all $m = 1, \dots, l$.  With this, we are ready to show $\psi(\epsilon)$ as defined in Lemma \ref{lem:neumann}, with $\mu = \tilde{\mu}_m(\epsilon)$ and $\overline{\psi}(\epsilon)$ having the coefficients $\overrightarrow{c}(\epsilon)$, is an eigenfunction of $A_{\epsilon}$ of eigenvalue $\Lambda_m(\epsilon) = \Lambda + \tilde{\mu}_m(\epsilon)$.

We have
\begin{align*}
 & (A_0 - \Lambda) \psi(\epsilon) \\
 & =  \underbrace{(A_0 - \Lambda)R(B_{\epsilon} - \tilde{\mu}_m(t))}_{\mbox{orthogonality}} \psi(\epsilon) \\
 & = \underbrace{(B_{\epsilon} - \tilde{\mu}_m(\epsilon)) \psi(\epsilon) - \Pi_{E_{\Lambda}} \left( (B_{\epsilon} - \tilde{\mu}_m(\epsilon)) \psi(\epsilon) \right)}_{\mbox{Lemma \ref{lem:pseudo_inv}}} \\
 & = \underbrace{(B_{\epsilon} - \tilde{\mu}_m(\epsilon)) \psi(\epsilon) - \sum_{m=1}^l  \left(  \sum_{m'=1}^l c_{m'}(\epsilon) \langle \phi_{\Lambda, m}, (B_{\epsilon} - \tilde{\mu}(\epsilon)) \sum_{n=0}^{\infty} S^n \phi_{\Lambda, m'} \rangle \right) \phi_{\Lambda, m}}_{\mbox{Lemma \ref{lem:neumann}}} \\
 & = (B_{\epsilon} - \tilde{\mu}_m(\epsilon)) \psi(\epsilon)
\end{align*}
with the last equation following from the implications of $F(\tilde{\mu}_m(\epsilon), \epsilon) = 0$ namely (\ref{eqn:solut_solvab}).  Therefore, $A_{\epsilon} \psi(\epsilon) = \left( \Lambda + \tilde{\mu}_m(\epsilon) \right) \psi(\epsilon)$, and finally yielding the eigenfunction we aimed for.  Since $A(\epsilon)$ is self-adjoint and $\Lambda$ is real, so is $\mu(\epsilon)$.  By positivity of $A_{\epsilon}$, we know that $\Lambda + \tilde{\mu}(\epsilon) \geq 0$ and $| \tilde{\mu}(\epsilon)| \leq \delta$.  Orthonormality follows from executing the Gram-Schmit procedure.

To prove the final part of the theorem, let $\Lambda' = \Lambda + (d_2 - d_1)$.  Then the operator norm of $\|\left( A_0 +  (2 \delta) \Pi_{E_{\Lambda}} \right) - \Lambda'\|_{op}$ is bounded below, as $ A_0 +  (2 \delta) \Pi_{E_{\Lambda}}$ has no spectrum in $J$, and this continues to be the case in the interval $[-d_1 + \Lambda, d_2 + \Lambda]$ for  parameters $|t| < t_1(\Lambda, \delta)$ where $\epsilon_1 \leq \epsilon_0$ thanks to analyticity which in turn provides continuity of our operator norms in $\epsilon$.  This completes our proof.
\end{proof}

\section{Biaxial Ellipsoids in $\mathbb{R}^3$} \label{sect:biax}

\subsection{Coordinate calcuations and reduced equations}

Let $\phi \in (0, \pi)$ and $\theta \in [0, 2\pi)$.  A natural set of ellipsoidal coordinates are $(a \sin \phi \cos \theta, a \sin \phi \cos \theta, b \cos
\phi)$.  We don't make any assumptions on the sizes of $a$ or $b$ yet.  The induced metric in these coordinates is of the form
\begin{align*}
g_{11} = a^2 \cos^2 \phi + b^2 \sin^2 \phi, g_{12}=g_{21}=0, g_{22} = a^2 \sin^2 \phi.
\end{align*}
Notice that the functions $g_{11}$ and $g_{22}$ are, respectively, the squared chord length for the ellipse in $\R^2$ given by $x^2/a^2 + z^2/b^2$ squared and the squared radius of the $S^1$ cross section of our ellipsoid $E_{a,b}$.

Therefore the corresponding Laplace-Beltrami operator takes the form
\begin{equation}
\Delta_{g_0} = (a^2 \cos^2 \phi + b^2 \sin^2 \phi)^{-1} \partial_{\phi \phi} + (a^2 \sin^2 \phi)^{-1} \partial_{\theta \theta} + h(\phi) \partial_{\phi},
\end{equation}
where $h(\phi) = \sqrt{\det(g)}^{-1} \partial_{\phi}(\sqrt{\det(g)} (a^2 \cos^2 \phi + b^2 \sin^2 \phi)^{-1})$.  In the upcoming sections, we set $a = 1 + \alpha \epsilon$ and $b = 1 + \beta \epsilon$ where $\alpha, \beta \neq 0$ independent in $\epsilon$.  For $\epsilon_0 = 1/2$, the metric $g_{\epsilon}$ for $E_{a,b}$ is analytic and admits a finite polynomial expansion in $\epsilon$.  Hence, Theorem \ref{thm:berger_lem} applies and we can perform calculations using analytic series in $\epsilon$ for a possibly smaller threshold $\epsilon_0$.

Thanks to the natural $S^1$ action on $E_{a,b}$, basic representation theory (see for instance Terras' treatise \cite{Ter}) tells us that $L^2(E_{a,b}, dV)$ has a basis consisting of separable eigenfunctions of the form $u(\phi) e^{im \theta}$ where $m \in \Z$.  Plugging this ansatz into the Laplace-Beltrami operator and performing the standard calculations leads to the following separated equations written in Sturm-Liouville form:
\begin{equation*}
\frac{ \left(  \frac{a \sin \phi}{\sqrt{a^2 \cos^2 \phi + b^2 \sin^2 \phi}} \partial_{\phi} \left(  \frac{a \sin \phi}{\sqrt{a^2 \cos^2 \phi + b^2 \sin^2 \phi}} \partial_{\phi} \ \right) + \Lambda a^2 \sin^2 \phi \right) u}{u} = m^2 = - \frac{\partial_{\theta \theta} f}{f}
\end{equation*}
where $f(\theta) = e^{im \theta}$.  Thus, the factor $u$ satisfies the following reduced equation
\begin{equation} \label{eqn:biaxial_ODE}
\left(  \frac{a \sin \phi}{\sqrt{a^2 \cos^2 \phi + b^2 \sin^2 \phi}} \partial_{\phi} \left(  \frac{a \sin \phi}{\sqrt{a^2 \cos^2 \phi + b^2 \sin^2 \phi}} \partial_{\phi} \ \right) + \Lambda a^2 \sin^2 \phi \right) u - m^2 u = 0,
\end{equation}
exhibiting some dependence on the integral parameter $m$; in the upcoming calculations, we incorporate the parameter $a^2$ into $\Lambda$ and abuse notation by calling the new eigenvalue $\Lambda(\epsilon)$.  We will use these equations to compute approximations for the purported basis of $L^2(E_{a,b}, dV)$ using the analytic perturbation introduced in Section \ref{sect:analyt_pert}.

\subsection{Deriving the first solvability condition}

The theory of 2nd-order ODEs tells us that the eigenvalues $\Lambda
(\epsilon )$, for each $m$, are simple. Thanks to the term $m^{2}$ and
therefore being able to use $\pm m$ to generate potentially different
eigenfunctions in $\phi $, we know that we can write each solution in $\phi $
as either $u_{m}^{+}$ or $u_{-m}$: it is of these $u_{\pm m}$ that we take
the expansion and in turn approximate $\Lambda ^{2}(\epsilon )$, which we
see has multiplicity at least $2$ for $m\neq 0$ and multiplicity at least 1
when $m=0$.

Theorem \ref{thm:rellich} holds for the Legendre equation on $[-1,1]$. Hence
we know that given $l(l+1)\in spec(S^{2})$, there exists $\epsilon
_{0}(S^{2},l(l+1))$ such that the following expansions are valid:
\begin{align*}
& \Lambda (\epsilon )=l(l+1)+\epsilon \Lambda _{1}+\epsilon ^{2}\Lambda
_{2}+\dots  \\
& u_{\pm m}(\phi ,\epsilon )=u_{0}(\phi )+\epsilon u_{1}(\phi )+\epsilon
^{2}u_{2}(\phi )+\dots.
\end{align*}%
We make note that we are abusing notation by using $\Lambda_1$ to represent the first-order coefficient of $\Lambda$; as we have fixed the eigenvalue $\Lambda$, this use is unambiguous.

The simplicity of the Sturm-Liouville spectrum is used when writing the
order 0 (in $\epsilon $) term for $u_{\pm m}$. To these expansions, we apply
\begin{equation*}
A_{\epsilon }=\frac{1}{\sin ^{2}\phi }\left( \frac{\sin \phi }{\sqrt{%
a^{2}\cos ^{2}\phi +b^{2}\sin ^{2}\phi }}\partial _{\phi }\left( \frac{\sin
\phi }{\sqrt{a^{2}\cos ^{2}\phi +b^{2}\sin ^{2}\phi }}\partial _{\phi
}\right) -\frac{m^{2}}{a^{2}\sin ^{2}\phi }\right)
\end{equation*}%
and work out the formal series for
\begin{equation*}
A_{\epsilon }u_{\pm m}(\epsilon )=-\Lambda (\epsilon )u_{\pm m}(\epsilon ).
\end{equation*}%
We expand $A_{\varepsilon }$ in $\varepsilon $ to obtain
\begin{equation*}
A_{\epsilon }=A_{0}+\varepsilon A_{1}+O(\varepsilon ^{2})
\end{equation*}%
where the big-O notation means that the \textquotedblleft implicit" object
is a 2nd-order differential operator, and with%
\begin{eqnarray}
A_{0} &=&\partial _{\phi \phi }+\cot \phi \partial _{\phi }-\frac{m^{2}}{%
\sin ^{2}\phi } \\
A_{1} &=&(\alpha -\beta )\left( 2\sin ^{2}\phi \partial _{\phi \phi }+2\sin
(2\phi )\partial _{\phi }\right) -2\alpha A_{0}
\end{eqnarray}%
A gathering of the 0th-order and 1st-order terms in $\epsilon $ yield the
following two equations:
\begin{align}
A_{0}u_{0}& =-l(l+1)u_{0},\mbox{ and }  \label{O1} \\
A_{0}u_{1}+l(l+1)u_{1}& =-A_{1}u_{0}-\Lambda _{1}u_{0}.  \label{O2}
\end{align}%
The solution to (\ref{O1})\ is given by the spherical harmonics $%
u_{0}=c_{0}P_{l}^{m}(\cos \phi )$ where $c_{0}$ is any constant.

The operator $A_{0}$ is self-adjoint with respect to the measure $\sin \phi
d\phi $, whose corresponding inner product we denote by $\langle ,\rangle $.
Taking the product of both sides of (\ref{O2})\ with respect to $u_{0}$ we
then obtain: $\left\langle u_{0},A_{0}u_{1}+l(l+1)u_{1}\right\rangle
=\left\langle u_{1},A_{0}u_{0}+l(l+1)u_{0}\right\rangle =0,$ so that $%
-\langle u_{0},A_{1}u_{0}\rangle -\Lambda _{1}\langle u_{0},u_{0}\rangle =0$
or
\begin{equation}
\Lambda _{1}=\frac{-\langle u_{0},A_{1}u_{0}\rangle }{\langle
u_{0},u_{0}\rangle }.  \label{Lam1}
\end{equation}

Next we compute,
\begin{equation*}
A_{1}u_{0}=\left( \alpha -\beta \right) \left\{ \left( \left( -l\left(
l+1\right) 2\sin ^{2}\phi +2m^{2}\right) u_{0}+2\sin \phi \cos \phi \partial
_{\phi }u_{0}\right) \right\} +2\alpha l\left( l+1\right) u_{0}
\end{equation*}%
where we used the Legendre equation (\ref{O1})\ to rewrite $\partial _{\phi
\phi }u_{0}=\left( -\cot \phi \partial _{\phi }+\frac{m^{2}}{\sin ^{2}\phi }%
-l(l+1)\right) u_{0}.$ Therefore (\ref{Lam1}) yields the formula

\begin{equation*}
\Lambda _{1}=\left( \beta -\alpha \right) \frac{\int \left\{ \left(
-2l\left( l+1\right) \sin ^{2}\phi +2m^{2}\right) u_{0}+2\sin \left( \phi
\right) \cos \phi \partial _{\phi }u_{0}\right\} u_{0}\sin \phi d\phi }{%
\int_{0}^{\pi }u_{0}^{2}\sin \phi d\phi .}-2\alpha l\left( l+1\right) .
\end{equation*}%
Changing variables $\cos \phi =t,$ we then obtain%
\begin{equation}
\Lambda _{1}=\left( \beta -\alpha \right) \frac{\int_{-1}^{1}\left\{ \left(
-2l\left( l+1\right) \left( 1-t^{2}\right) +2m^{2}\right) P^{2}(t)-2\left(
1-t^{2}\right) tP^{\prime }(t)P(t)\right\} dt}{\int_{0}^{\pi }P(t)^{2}dt.}%
-2\alpha l\left( l+1\right)  \label{2:24}
\end{equation}%
where $P(t)=\frac{\partial ^{l+m}}{dt^{(l+m)}}\left[ \left( 1-t^{l}\right)
^{m}\right] .$ Integrating by parts, we rewrite:%
\begin{equation*}
\int_{-1}^{1}2\left( 1-t^{2}\right) tP^{\prime }P=-\int_{-1}^{1}\left(
1-3t^{2}\right) P^{2}
\end{equation*}%
so that (\ref{2:24})\ becomes%
\begin{equation*}
\Lambda _{1}=\left( \beta -\alpha \right) \left\{ \left( -2l\left(
l+1\right) +2m^{2}+1\right) +(2l\left( l+1\right) -3)\frac{%
\int_{-1}^{1}t^{2}P^{2}dt}{\int_{0}^{\pi }P^{2}dt.}\right\} -2\alpha l\left(
l+1\right)
\end{equation*}%
Finally, the ratio $\frac{\int_{-1}^{1}t^{2}P^{2}dt}{\int_{-1}^{1}P^{2}}=%
\frac{2l^{2}-2m^{2}+2l-1}{\left( 2l+3\right) \left( 2l-1\right) }$ is
evaluated in Appendix \ref{app}. This yields formula (\ref{eqn:Lambda1biaxial}).

The multiplicity statement in Theorem \ref{thm:biaxial} follows easily from here
after taking into account that for each $l(l+1)$, we get a single analytic
eigenvalue for $m=0$ and get a double analytic eigenvalue for each $m=\pm
1,\dots ,\pm l$. Now apply Corollary \ref{cor:all_analyt} which implies that
the multiplicities cannot be higher than 2.

\section{Triaxial ellipsoids in $\mathbb{R}^{3}$}  \label{sect:triax}

\subsection{Coordinate calculations}
We now pursue the calculation of eigenvalues for triaxial ellipsoids. For
organizational reasons, let us write $\Delta_{g}$ in coordinates. In
$\mathbb{R}^{3}$, we take the coordinates $(a\cos\phi\cos\theta,b\cos\phi
\sin\theta,c\sin\phi)$ where $\phi\in(0,\pi)$ and $\theta\in\lbrack0,2\pi)$.
This leads us the following expression of $\Delta_{g}$ as follows:
\[
\Delta_{g}=A\partial_{\phi\phi}+B\partial_{\phi\theta}+C\partial_{\theta
\theta}+E\partial_{\phi}+F\partial_{\theta}%
\]
where%
\begin{align*}
A  &  :=\frac{g_{22}}{D},\ \ B:=-2\frac{g_{12}}{D},\ \ C:=\frac{g_{11}}{D},\\
E  &  :=D^{-1/2}\left(  \left(  g_{22}D^{-1/2}\right)  _{\phi}-\left(
g_{12}D^{-1/2}\right)  _{\theta}\right) \\
&  =\frac{\partial_{\phi}g_{22}}{D}-\frac{1}{2}\frac{g_{22}}{D^{2}}%
\partial_{\phi}D-\frac{\partial_{\theta}g_{12}}{D}+\frac{1}{2}\frac{g_{12}%
}{D^{2}}\partial_{\theta}D\\
F  &  :=D^{-1/2}\left(  \left(  g_{11}D^{-1/2}\right)  _{\theta}-\left(
g_{12}D^{-1/2}\right)  _{\phi}\right) \\
&  =\frac{\partial_{\theta}g_{11}}{D}-\frac{1}{2}\frac{g_{11}}{D^{2}}%
\partial_{\theta}D-\frac{\partial_{\phi}g_{12}}{D}+\frac{1}{2}\frac{g_{12}%
}{D^{2}}\partial_{\phi}D
\end{align*}
where%
\begin{align*}
D(\theta,\phi)  &  :=g_{11}g_{22}-g_{12}g_{21}\\
g_{11}(\theta,\phi)  &  :=\cos^{2}\phi\left(  a^{2}\cos^{2}\theta+b^{2}%
\sin^{2}\theta\right)  +c^{2}\sin^{2}\phi\\
g_{22}(\theta,\phi)  &  :=\sin^{2}\phi\left(  a^{2}\sin^{2}\theta+b^{2}%
\cos^{2}\theta\right) \\
g_{12}(\theta,\phi)  &  :=g_{21}=\left(  b^{2}-a^{2}\right)  \frac{\sin\left(
2\phi\right)  \sin\left(  2\theta\right)  }{4}%
\end{align*}

\subsection{Deriving the first solvability condition}
We want our triaxial ellipsoid to be a small perturbation of $S^{2}$, so we
set
\[
a:=1+\alpha\varepsilon,\ \ \ b:=1+\beta\varepsilon,\ \ c:=1+\gamma\varepsilon
\]
Note that our metric coefficients $g_{ij}$ are analytic and non-vanishing in
$\varepsilon$, therefore making the inverse metric $g^{ij}$'s coefficients
analytic as well.  We are in a position to apply Theorem \ref{thm:berger_lem}.

Thus, we can carefully massage $\Delta_{g}$ into an analytic
series of operators, specifically as
\[
\Delta_{g}=A_{0}+\varepsilon A_{1}+\varepsilon^{2}A_{2}(\varepsilon)
\]
where%
\[
A_{0}=\partial_{\phi\phi}+\frac{1}{\sin^{2}\phi}\partial_{\theta\theta}%
+\frac{\cos\phi}{\sin\phi}\partial_{\phi}%
\]
and%
\begin{align*}
A_{1}  &  :=\left[  \left(  2\beta-2\alpha\right)  \cos^{2}\theta\cos^{2}%
\phi+\left(  2\gamma-2\beta\right)  \cos^{2}\phi-2\gamma\right]
\partial_{\phi\phi}+{\frac{\left(  \left(  2\alpha-2\beta\right)  \cos
^{2}\theta-2\alpha\right)  }{\sin^{2}\phi}}\partial_{\theta\theta}\\
&  +4\left(  \alpha-\beta\right)  {\frac{\cos\phi\sin\theta\cos\theta}%
{\sin\phi}}\partial_{\phi\theta}+4\left(  \beta-\alpha\right)  {\frac
{\sin\theta\cos\theta}{\sin^{2}\phi}}\partial_{\theta}\\
&  +4{\left[  \left(  \left(  \beta-\alpha\right)  \left(  \cos\left(
\theta\right)  \right)  ^{2}-\beta+\gamma\right)  \cos^{2}\phi+\frac{3}%
{2}\left(  \alpha-\beta\right)  \cos^{2}\theta-\frac{\alpha}{2}+\beta
-\gamma\right]  }\frac{\cos\phi}{\sin\phi}\partial_{\phi}.
\end{align*}

Now, we set $u$ to be a solution of%
\[
\Delta_{g}u=-\Lambda(\epsilon)u.
\]
Expanding similarly as in Section \ref{sect:biax} thanks to Theorem
\ref{thm:berger_lem}, we are can again write the following two analytic series
in $\varepsilon:$
\begin{align*}
u  &  =u_{0}+\varepsilon u_{1}+\ldots\\
\Lambda(\varepsilon)  &  =l(l+1)+\varepsilon\Lambda_{1}+\ldots
\end{align*}
Gathering terms in $\varepsilon$ yields the 0th-order equation of
\[
A_{0}u_{0}=-l(l+1)u_{0}%
\]
for $l\in\mathbb{Z}^{+}$ and whose solution is given by
\begin{align*}
u_{0}  &  =C_{0}v_{0}(\theta,\phi)+\sum_{m=1}^{l}C_{m}v_{m}(\theta,\phi
)+D_{m}w_{m}(\theta,\phi),\ \ \ \text{where}\\
v_{m}  &  =\cos(m\theta)P_{l}^{m}(\cos(\phi));\ \ w_{m}=\sin(m\theta)P_{l}%
^{m}(\cos(\phi));
\end{align*}
the set $\{w_{0},\{w_{m},v_{m}\}_{m=1}^{l}\}$ form a basis for the space of
spherical harmonics with eigenvalue $l(l+1)$ with the $P_{l}^{m}$ being the
associated Legendre functions given by
\[
P_{l}^{m}(t)=A_{l,m}\left(  1-t^{2}\right)  ^{m/2}\frac{\partial^{m+l}%
}{\partial t^{m+l}}\left[  \left(  1-t^{2}\right)  ^{l}\right]  .
\]
The normalization constants $A_{l,m}$ are chosen so that%
\begin{equation}
\left\langle v_{m},v_{m}\right\rangle =\left\langle w_{m},w_{m}\right\rangle
=1, \label{v1}%
\end{equation}
where the inner product $\left\langle ,\right\rangle $ is with respect to the
metric on $S^{2}$, that is
\[
\left\langle v,w\right\rangle :=\int_{0}^{2\pi}\int_{0}^{\pi}v(\theta
,\phi)w(\theta,\phi)\sin\phi d\phi d\theta.
\]

At order $\epsilon$ we have
\begin{equation}
A_{0}u_{1}+l(l+1)u_{1}=-\Lambda_{1}u_{0}-A_{1}u_{0}.
\label{eqn:triax_1st_solv}%
\end{equation}
Note that $A_{0}$ is self-adjoint with respect to the inner product
$\langle,\rangle$. Multiplying (\ref{eqn:triax_1st_solv}) by $v_{k}$ or
$w_{k}$ and integrating we then obtain the solvability conditions%
\begin{align*}
\left\langle v_{k},u_{0}\right\rangle \Lambda_{1}  &  =-\left\langle
v_{k},A_{1}u_{0}\right\rangle ,\ \ \ k=0\ldots l,\\
\left\langle w_{k},u_{0}\right\rangle \Lambda_{1}  &  =-\left\langle
w_{k},A_{1}u_{0}\right\rangle ,\ \ \ k=1\ldots l.
\end{align*}
Thanks to orthonormality, we have that $\left\langle v_{k}u_{0}\right\rangle
=C_{k}$ and $\left\langle w_{k}u_{0}\right\rangle =D_{k}.$

Thus, in contrast to the biaxial case, the first solvability equation returns
a more complicated system of linear equations to solve. This eigenvalue
problem problem for $\Lambda_{1}$ can be read off as
\begin{equation}
\label{eqn:eigen_prob_triax}M \overline{V} = \Lambda_{1} \overline{V}%
\end{equation}
where $M$ is the $\left(  2l+1\right)  \times\left(  2l+1\right)  $ matrix and
$\overline{V}$ is $2l+1$ column vector containing coefficients $C_{m}$ and
$D_{m}.$ We now seek to simplify the ``matrix elements" $\left\langle
v_{m},A_{1}u_{0}\right\rangle .$

\subsection{Expanding into Fourier modes}

To execute this simplification of the quantities $\left\langle
v_{m},A_{1}u_{0}\right\rangle$ , we expand in terms of Fourier modes on $S^{1}$ in the variable
$\theta$. For simplicity of notation, set $P(\phi):=P_{l}^{m}(\cos(\phi))$. We
find that\bes\label{A1u}
\begin{equation}
A_{1}\left(  \cos(m\theta)P\right)  =g_{m-}(\phi)\cos\left(  \left(
m-2\right)  \theta\right)  +g_{m}(\phi)\cos\left(  m\theta\right)
+g_{m+}(\phi)\cos\left(  \left(  m+2\right)  \theta\right)
\end{equation}
and%
\begin{equation}
A_{1}\left(  \sin(m\theta)P\right)  =g_{m-}(\phi)\sin\left(  \left(
m-2\right)  \theta\right)  +g_{m}(\phi)\sin\left(  m\theta\right)
+g_{m+}(\phi)\sin\left(  \left(  m+2\right)  \theta\right)
\end{equation}
\ees and\bes\label{g}%

\begin{align}
g_{m-}(\phi)  &  =\frac{\beta-\alpha}{2}\left(  l\left(  l+1\right)  \sin
^{2}\phi+\left(  -l(l+1)-m^{2}\right)  +\frac{2m(m-1)}{\sin^{2}\phi}\right)
P\nonumber\\
&  +\frac{\beta-\alpha}{2}\left(  {-\cos\phi\sin\phi+}\frac{2(m-1)\cos\phi
}{\sin\phi}\right)  P_{\phi},\\
g_{m+}(\phi)  &  =\frac{\beta-\alpha}{2}\left(  l\left(  l+1\right)  \sin
^{2}\phi+\left(  -l(l+1)-m^{2}\right)  +\frac{2m(m+1)}{\sin^{2}\phi}\right)
P\nonumber\\
&  +\frac{\beta-\alpha}{2}\left(  {-\cos\phi\sin\phi+}\frac{2(-m-1)\cos\phi
}{\sin\phi}\right)  P_{\phi},\\
g_{m}(\phi)  &  =\left(  \left(  \alpha+\beta-2\gamma\right)  {m}^{2}+l\left(
l+1\right)  2\gamma+\left(  \alpha+\beta-2\gamma\right)  l(l+1)\cos^{2}%
\phi\right)  P+\nonumber\\
&  \left(  \alpha+\beta-2\gamma\right)  \sin\left(  \phi\right)  \cos\left(
\phi\right)  P_{\phi}.
\end{align}
\ees In above expressions, we have used the standard Legendre ODE (albeit with
$t=\cos\phi$), $P_{\phi\phi}=-\frac{\cos\phi}{\sin\phi}P_{\phi}+\left[
\frac{m^{2}}{\sin^{2}\phi}-l\left(  l+1\right)  \right]  P$ in order to
eliminate any occurrence of $P_{\phi\phi}.$

In total, each quantity $\left\langle v_{m},A_{1}u_{0}\right\rangle $
simplifies to
\begin{align*}
\left\langle v_{m},A_{1}u_{0}\right\rangle  &  =\left\langle v_{m},A_{1}%
v_{m}\right\rangle C_{m}+\left\langle v_{m},A_{1}v_{m+2}\right\rangle
C_{m+2}+\left\langle v_{m},A_{1}v_{m-2}\right\rangle C_{m-2}\\
\left\langle w_{m},A_{1}u_{0}\right\rangle  &  =\left\langle w_{m},A_{1}%
w_{m}\right\rangle D_{m}+\left\langle w_{m},A_{1}w_{m+2}\right\rangle
D_{m+2}+\left\langle w_{m},A_{1}w_{m-2}\right\rangle D_{m-2}%
\end{align*}
where we used the convention that $C_{m}=0$ if $m\notin\left\{  0,1,\ldots
,l\right\}  $ and similarly $D_{m}=0$ if $m\notin\left\{  1,2,\ldots
,l\right\}  $. \ In other words, odd-frequency Fourier modes couple with
neighbouring odd-frequency modes, and even-frequency Fourier modes couple with
their neighbors. Therefore the eigenvalue problem (\ref{eqn:eigen_prob_triax})
decomposes into four distinct subproblems:\ one for even-indexed $C$'s, one
for odd-indexed $C$'s, and so forth. As a result, $\Lambda_{1}$ is
characterized by the following.

\begin{prop}
\label{prop:triax_entries}Define\bes%
\begin{equation*}
l_{e}=\left\{
\begin{array}
[c]{c}%
l,\text{ if }l\text{ is even}\\
l-1,\text{ if }l\text{ is odd}%
\end{array}
\right.  ;\ \ \ \ \ \ l_{o}=\left\{
\begin{array}
[c]{c}%
l,\text{ if }l\text{ is odd}\\
l-1,\text{ if }l\text{ is even}%
\end{array}
\right.
\end{equation*}
and define the tridiagonal matrices\label{M}{\small
\begin{equation}
M_{\cos,e}=-\left[
\begin{array}
[c]{ccccc}%
\left\langle v_{0},A_{1}v_{0}\right\rangle  & \left\langle v_{0},A_{1}%
v_{2}\right\rangle  & 0 & \ldots & 0\\
\left\langle v_{2},A_{1}v_{0}\right\rangle  & \left\langle v_{2},A_{1}%
v_{2}\right\rangle  & \left\langle v_{2},A_{1}v_{4}\right\rangle  & \ddots &
\vdots\\
0 & \left\langle v_{4},A_{1}v_{2}\right\rangle  & \left\langle v_{4}%
,A_{1}v_{4}\right\rangle  & \ddots & 0\\
\vdots & \ddots & \ddots & \ddots & \left\langle v_{l_{e}-2},A_{1}v_{l_{e}%
}\right\rangle \\
0 & \ldots & 0 & \left\langle v_{l_{e}},A_{1}v_{l_{e}-2}\right\rangle  &
\left\langle v_{l_{e}},A_{1}v_{l_{e}}\right\rangle
\end{array}
\right]
\end{equation}%
\begin{equation}
M_{\cos,o}=-\left[
\begin{array}
[c]{ccccc}%
\left\langle v_{1},A_{1}v_{1}\right\rangle  & \left\langle v_{1},A_{1}%
v_{3}\right\rangle  & 0 & \ldots & 0\\
\left\langle v_{3},A_{1}v_{1}\right\rangle  & \left\langle v_{3},A_{1}%
v_{3}\right\rangle  & \left\langle v_{3},A_{1}v_{5}\right\rangle  & \ddots &
\vdots\\
0 & \left\langle v_{5},A_{1}v_{3}\right\rangle  & \left\langle v_{7}%
,A_{1}v_{7}\right\rangle  & \ddots & 0\\
\vdots & \ddots & \ddots & \ddots & \left\langle v_{l_{o}-2},A_{1}v_{l_{0}%
}\right\rangle \\
0 & \ldots & 0 & \left\langle v_{l_{0}},A_{1}v_{l_{0}-2}\right\rangle  &
\left\langle v_{l_{0}},A_{1}v_{l_{0}}\right\rangle
\end{array}
\right]
\end{equation}%
\begin{equation}
M_{\sin,e}=-\left[
\begin{array}
[c]{ccccc}%
\left\langle w_{2},A_{1}w_{2}\right\rangle  & \left\langle w_{2},A_{1}%
w_{4}\right\rangle  & 0 & \ldots & 0\\
\left\langle w_{4},A_{1}w_{2}\right\rangle  & \left\langle w_{4},A_{1}%
w_{4}\right\rangle  & \left\langle w_{4},A_{1}w_{6}\right\rangle  & \ddots &
\vdots\\
0 & \left\langle w_{6},A_{1}w_{4}\right\rangle  & \left\langle w_{6}%
,A_{1}w_{8}\right\rangle  & \ddots & 0\\
\vdots & \ddots & \ddots & \ddots & \left\langle w_{l_{e}-2},A_{1}w_{l_{e}%
}\right\rangle \\
0 & \ldots & 0 & \left\langle w_{l_{e}},A_{1}w_{l_{e}-2}\right\rangle  &
\left\langle w_{l_{e}},A_{1}w_{l_{e}}\right\rangle
\end{array}
\right]
\end{equation}%
\begin{equation}
M_{\sin,o}=-\left[
\begin{array}
[c]{ccccc}%
\left\langle w_{1},A_{1}w_{1}\right\rangle  & \left\langle w_{1},A_{1}%
w_{3}\right\rangle  & 0 & \ldots & 0\\
\left\langle w_{3},A_{1}w_{1}\right\rangle  & \left\langle w_{3},A_{1}%
w_{3}\right\rangle  & \left\langle w_{3},A_{1}w_{5}\right\rangle  & \ddots &
\vdots\\
0 & \left\langle w_{5},A_{1}w_{3}\right\rangle  & \left\langle w_{7}%
,A_{1}w_{7}\right\rangle  & \ddots & 0\\
\vdots & \ddots & \ddots & \ddots & \left\langle w_{l_{o}-2},A_{1}w_{l_{o}%
}\right\rangle \\
0 & \ldots & 0 & \left\langle w_{l_{0}},A_{1}w_{l_{0}-2}\right\rangle  &
\left\langle w_{l_{o}},A_{1}w_{l_{o}}\right\rangle
\end{array}
\right]
\end{equation}
} \ees

Then for $\Lambda=l\left(  l+1\right)  +\varepsilon\Lambda_{1}+O\left(  \varepsilon
^{2}\right),$ as given by Theorem \ref{thm:berger_lem}, we have that $\Lambda_{1}$ is one of the $2l+1$ eigenvalues of the four matrices
(\ref{M}). 
\end{prop}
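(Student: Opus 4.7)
The plan is to use the first solvability condition (\ref{eqn:triax_1st_solv}) together with the Fourier--mode structure of $A_1$ described in (\ref{A1u})--(\ref{g}) to show that the full $(2l+1)\times(2l+1)$ eigenvalue problem (\ref{eqn:eigen_prob_triax}) decouples into four independent tridiagonal blocks, which are exactly the matrices displayed in (\ref{M}).

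First, I would substitute the expansion $u_0=C_0 v_0+\sum_{m=1}^{l}(C_m v_m+D_m w_m)$ into the solvability relations $\langle v_k,A_1 u_0\rangle=-\Lambda_1 C_k$ and $\langle w_k,A_1 u_0\rangle=-\Lambda_1 D_k$, obtained (as already derived in the text) by pairing (\ref{eqn:triax_1st_solv}) with a basis vector and using self-adjointness of $A_0$ together with the orthonormality (\ref{v1}). This reduces the problem to computing the ``matrix elements'' $\langle v_k,A_1 v_m\rangle$, $\langle v_k,A_1 w_m\rangle$, $\langle w_k,A_1 v_m\rangle$ and $\langle w_k,A_1 w_m\rangle$.

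Next, I would invoke the Fourier-mode identities (\ref{A1u}): $A_1 v_m$ is a linear combination of $\cos((m-2)\theta)$, $\cos(m\theta)$, $\cos((m+2)\theta)$ multiplied by functions of $\phi$, and $A_1 w_m$ is the analogous combination with sines. Since $\langle\cos(j\theta),\cos(k\theta)\rangle_{L^2[0,2\pi]}=0$ for $j\ne k$ and $\langle\cos(j\theta),\sin(k\theta)\rangle_{L^2[0,2\pi]}=0$ for all $j,k$, the $\theta$-integration in $\langle v_k,A_1 v_m\rangle$ forces $k\in\{m-2,m,m+2\}$, and $\langle v_k,A_1 w_m\rangle=\langle w_k,A_1 v_m\rangle=0$ for every $k,m$. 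Thus the cosine coefficients $\{C_m\}$ and the sine coefficients $\{D_m\}$ are completely uncoupled, and within each family only indices of the same parity are linked (since the couplings shift $m$ by $\pm 2$). This produces precisely the four tridiagonal blocks $M_{\cos,e}$, $M_{\cos,o}$, $M_{\sin,e}$, $M_{\sin,o}$ with the ranges of indices as defined by $l_e$ and $l_o$; note $w_0\equiv 0$ is correctly excluded. The overall sign $-\Lambda_1$ in the solvability condition accounts for the minus sign in front of each block in (\ref{M}).

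Finally, since at least one of the four coefficient vectors $(C_{\mathrm{even}})$, $(C_{\mathrm{odd}})$, $(D_{\mathrm{even}})$, $(D_{\mathrm{odd}})$ is non-trivial (otherwise $u_0\equiv 0$, contradicting Theorem \ref{thm:berger_lem}), the value $\Lambda_1$ must be an eigenvalue of the corresponding block, and hence of one of the four matrices. The only real work is bookkeeping: checking the parity argument that gives the even/odd split, verifying that the shifts $m\mapsto m\pm 2$ preserve the allowed range of $m$ (so that boundary terms with $m=0,1$ or $m=l$ are handled by the convention $C_m=D_m=0$ outside the admissible set), and confirming that the tridiagonal structure of each block is symmetric in the entries (which is immediate from self-adjointness of $A_1$ with respect to $\langle\cdot,\cdot\rangle$). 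I expect this bookkeeping, rather than any analytical difficulty, to be the main obstacle, since the vanishing of the off-parity and cosine-sine cross terms is forced by orthogonality of trigonometric functions on $[0,2\pi]$.
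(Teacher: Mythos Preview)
Your proposal is correct and follows essentially the same route as the paper: the proposition is stated as a summary of the computations in \S\ref{sect:triax}, where the solvability condition (\ref{eqn:triax_1st_solv}) is paired with $v_k,w_k$, the Fourier-mode identities (\ref{A1u}) are invoked, and orthogonality of $\{\cos(k\theta),\sin(k\theta)\}$ forces the decoupling into the four tridiagonal blocks exactly as you describe. The only small difference is that the paper verifies symmetry of the off-diagonal entries by explicit computation in Appendix~\ref{app} rather than by appealing to self-adjointness of $A_1$; your argument is a clean shortcut there.
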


The entries of the matrices (\ref{M}) are computable explicitly -- see
Appendix \ref{app} -- with the following result:
\bes\label{entries}
\begin{align}
\left\langle v_{m},A_{1}v_{m}\right\rangle  &  =2\gamma l(l+1)+(\alpha
+\beta-2\gamma)\frac{2l\left(  l+1\right)  }{\left(  2l+3\right)  \left(
2l-1\right)  }\left(  l^{2}+m^{2}+l-1\right)  \mbox{ for }m\neq1,\\
\left\langle w_{m},A_{1}w_{m}\right\rangle  &  =\left\langle v_{m},A_{1}%
v_{m}\right\rangle ,\mbox{ for }m\neq1,\\
\left\langle v_{1},A_{1}v_{1}\right\rangle  &  =(\frac{3\alpha}{2}+\frac
{\beta}{2}-2\gamma)\frac{2l^{2}\left(  l+1\right)  ^{2}}{\left(  2l+3\right)
\left(  2l-1\right)  }+2l\left(  l+1\right)  \gamma,\\
\left\langle w_{1},A_{1}w_{1}\right\rangle  &  =(\frac{3\beta}{2}+\frac
{\alpha}{2}-2\gamma)\frac{2l^{2}\left(  l+1\right)  ^{2}}{\left(  2l+3\right)
\left(  2l-1\right)  }+2l\left(  l+1\right)  \gamma,\\
\left\langle v_{m-2},A_{1}v_{m}\right\rangle  &  =\left\langle v_{m}%
,A_{1}v_{m-2}\right\rangle =\left(  \beta-\alpha\right)  \frac{l\left(
l+1\right)  }{\left(  2l-1\right)  \left(  2l+3\right)  }\nonumber\\
\times &  \sqrt{\left(  l-m+1\right)  (l-m+2)\left(  l+m-1\right)
(l+m)}\times\left\{
\begin{array}
[c]{c}%
1\text{ if }m>2\text{ }\\
\sqrt{2}\text{ if }m=2,
\end{array}
\right. \\
\mbox{ and } \left\langle w_{m-2},A_{1}w_{m}\right\rangle  &  =\left\langle w_{m}%
,A_{1}w_{m-2}\right\rangle =\left\langle v_{m-2},A_{1}v_{m}\right\rangle
,\ \text{for}\ m\geq3.
\end{align}
\ees

\subsection{Explicit calculations for $l=1,2,3$}

We carry out Proposition \ref{prop:triax_entries} in the simple cases of $%
l=1,2$ and observe the form of the corresponding perturbed eigenvalues.

For $l=1$, we arrive at three possible choices for $\Lambda_{1}$: 
\begin{align}
\Lambda_{1} & =-\frac{4}{5}\left( \alpha+\beta+3\gamma\right) \text{ with }%
u_{0}=v_{0};  \label{12:09} \\
\Lambda_{1} & =-\frac{4}{5}\left( 3\alpha+\beta+\gamma\right) \text{ with }%
u_{0}=v_{1};  \label{12:10} \\
\Lambda_{1} & =-\frac{4}{5}\left( \alpha+3\beta+\gamma\right) \text{ with }%
u_{0}=w_{1}.   \label{12:11}
\end{align}
Note that this coincides with the formula (\ref{eqn:Lambda1biaxial})\ for
the biaxial case by taking $\gamma=\alpha:$ the formula (\ref{12:11})
corresponds to mode $m=0$ while (\ref{12:09})\ and (\ref{12:10})\ correspond
to the mode $m=\pm1.$ For a generic choice where all of $\alpha,\beta,$ $%
\gamma$ are distinct, we find that spectrum of $\Delta_{g}$ near $1\cdot2=2$
is simple and is of the form $2+\epsilon\Lambda_{1}+\mathcal{O}(\epsilon^{2})
$.

For $l=2,$ matrices (\ref{M})\ are of the size 2x2 1x1, 1x1 and 1x1,
respectively. The five resulting eigenvalues, respectively, are: 
\begin{align}
\Lambda _{1}& =-4\left( \alpha +\beta +\gamma \right) \pm \frac{16}{7}\sqrt{%
\alpha ^{2}+\beta ^{2}+\gamma ^{2}-\alpha \beta -\alpha \gamma -\beta \gamma 
}, \notag
\\
& -\frac{12}{7}\left( 3\alpha +\beta +3\gamma \right) -\frac{12}{7}\left(
3\alpha +3\beta +\gamma \right) ,\ -\frac{12}{7}\left( \alpha +3\beta
+3\gamma \right) .  
 \label{2:11}
\end{align}%
Again, for a generic choice where all of $\alpha ,\beta ,$ $\gamma $ are
distinct, we find that spectrum of $\Delta _{g}$ near to $2\cdot 3=6$ is
simple and of the form $6+\epsilon \Lambda _{1}+\mathcal{O}(\epsilon ^{2})$.
When $\gamma =\alpha $, eigenvalues (\ref{2:11})\ become, in the order as
listed,%
\begin{equation*}
\Lambda _{1}=-\frac{40}{7}\alpha -\frac{44}{7}\beta ,\ \ -\frac{72}{7}\alpha
-\frac{12}{7}\beta ,\ -\frac{72}{7}\alpha -\frac{12}{7}\beta ,\ -\frac{48}{7}%
\alpha -\frac{36}{7}\beta ,\ -\frac{48}{7}\alpha -\frac{36}{7}\beta ,\ 
\end{equation*}%
and as expected, they coincide with the formula (\ref{eqn:Lambda1biaxial}),
with $m=0,\ \pm 2,\ \pm 2,\ \pm 1,\ \pm 1$, respectively.

Finally, for $l=3,$ matrices (\ref{M})\ are of the size 2x2, 2x2, 2x2 and
1x1, respectively, and yield the following eigenvalues for the correction $%
\Lambda _{1}:$%
\begin{eqnarray*}
M_{\cos ,e} &:&-\frac{104\alpha }{15}-\frac{104\beta }{15}-\frac{152\gamma }{%
15}\pm \frac{32}{15}\sqrt{4\alpha ^{2}+4\beta ^{2}-7\alpha \beta -\alpha
\gamma -\beta \gamma +\gamma ^{2}} \\
M_{\cos ,o} &:&-\frac{104\gamma }{15}-\frac{104\beta }{15}-\frac{152\alpha }{%
15}\pm \frac{32}{15}\sqrt{4\gamma ^{2}+4\beta ^{2}-7\gamma \beta -\gamma
\alpha -\beta \alpha +\alpha ^{2}} \\
M_{\sin ,e} &:&-8\alpha -8\beta -8\gamma  \\
M_{\sin ,o} &:&-\frac{104\gamma }{15}-\frac{104\alpha }{15}-\frac{152\beta }{%
15}\pm \frac{32}{15}\sqrt{4\gamma ^{2}+4\alpha ^{2}-7\gamma \alpha -\gamma
\beta -\alpha \beta +\beta ^{2}}.
\end{eqnarray*}%
As with previous cases, we have verified that all these eigenvalues are
distinct whenever $\alpha ,\beta ,\gamma $ are distinct. Taking $\gamma
=\beta $, they become%
\begin{eqnarray*}
M_{\cos ,e} &:&-\frac{64}{5}\alpha -\frac{56}{5}\beta ,\ \ \ -\frac{64\alpha 
}{5}-\frac{8}{5}\beta  \\
M_{\cos ,o} &:&-\frac{64}{5}\alpha -\frac{56}{5}\beta ,\ \ \ \ -\frac{%
64\alpha }{5}-\frac{8}{5}\beta  \\
M_{\sin ,e} &:&-16a-8b\\
M_{\sin ,o} &:&-16a-8b,\ \ \ \ -\frac{176}{5}a-\frac{184}{5}b .
\end{eqnarray*}%
These agree with formula (\ref{eqn:Lambda1biaxial}), with $m=\pm 1,\ \ \pm
3,\ \pm 1,\ \pm 3,\ \pm 2,\  \pm 2,\ 0,$ respectively.

\section{Numerical experiments and conjectures}

\label{sect:numerics}

\subsection{Biaxial eigenvalues}

\label{sect:biaxial_numerics}

Equation (\ref{eqn:biaxial_ODE}) can be used to compute the eigenvalues
numerically for any bi-axial ellipsoid. To solve (\ref{eqn:biaxial_ODE})
numerically, we discretize the space using the usual centered difference
discretization. The procedure leads to an $N\times N$-matrix eigenvalue
problem $Av=\Lambda v$ where the eigenfunction $v(\phi)$ is approximated by $%
v(\phi_{j})\approx v_{j},\ \ \phi_{j}=\frac{j\pi}{N}.$ We impose Neumann
boundary conditions at the poles: $v^{\prime}(0)=0,\ \ v^{\prime}(\pi)=0.$
In our computations, we have set $N=400$ which is sufficient to compute $%
\Lambda_{\text{numeric}}$ to 4 significant digits; we verified that doubling
the mesh-size did not change the answer within that precision.

Table \ref{table:1} provides a comparison between the analytical result for $%
\Lambda_{1}$, as given in equation (\ref{eqn:Lambda1biaxial}), and the
numerical approximation using the above procedure. The axial parameters $a=1$
and $b=1+\varepsilon$ are considered with $\varepsilon=0.1$ and $\varepsilon
=0.05$. While we did not prove this analytically, the numerics suggest that
the relative error scales linearly with $\varepsilon$, as would be expected
assuming that $\Lambda$ is analytic in $\varepsilon.$

\begin{table}[ptb]
\begin{equation*}
\begin{tabular}{|l|l|l||l|l|l||l|l|l|}
\hline
\multicolumn{3}{|l||}{} & \multicolumn{3}{l||}{$\varepsilon=0.1$} & 
\multicolumn{3}{l|}{$\varepsilon=0.05$} \\ \hline
$m$ & $\Lambda_{0}$ & $\Lambda_{1}$ & $\Lambda_{\text{numeric}}$ & $\frac{%
\Lambda_{\text{numeric}}-\Lambda_{0}}{\varepsilon}$ & \%err & $\Lambda_{%
\text{numeric}}$ & $\frac{\Lambda_{\text{numeric}}-\Lambda_{0}}{\varepsilon}$
& \%err \\ \hline
0 & 2 & -2.400 & 1.7772 & -2.228 & 7.17\% & 1.8844 & -2.312 & 3.67\% \\ 
\hline
& 6 & -6.2857 & 5.4079 & -5.921 & 5.80\% & 5.6950 & -6.100 & 2.95\% \\ \hline
& 12 & -12.266 & 10.8463 & -11.53 & 6.00\% & 11.4052 & -11.896 & 3.02\% \\ 
\hline
1 & 2 & -0.8 & 1.9250 & -0.750 & 6.25\% & 1.9612 & -0.776 & 3.00\% \\ \hline
& 6 & -5.1429 & 5.5227 & -4.773 & 7.19\% & 5.7521 & -4.958 & 3.60\% \\ \hline
& 12 & -11.2 & 10.9509 & -10.49 & 6.34\% & 11.4573 & -10.854 & 3.09\% \\ 
\hline
2 & 6 & 1.7143 & 5.8404 & -1.596 & 6.90\% & 5.9173 & -1.654 & 3.52\% \\ 
\hline
& 12 & 8 & 11.2595 & -7.405 & 7.44\% & 11.6152 & -7.696 & 3.80\% \\ \hline
3 & 12 & -2.666 & 11.7527 & -2.473 & 7.24\% & 11.8716 & -2.568 & 3.68\% \\ 
\hline
\end{tabular}
\end{equation*}
\caption{Comparison between asymptotic value of $\Lambda_{1}$ and the
estimated numerical value for the biaxial ellipsoid. Here, $a=1$ and $b=1+%
\protect\varepsilon$ (corresponding to $\protect\alpha=0,\protect\beta=1$)
and with $\protect\varepsilon=0.1$ or 0.05. The error column is the relative
error between $\Lambda_{1}$ and $\frac{\Lambda_{\text{numeric}}-\Lambda_{0}}{%
\protect\varepsilon}.$}
\label{table:1}
\end{table}

\subsection{Triaxial eigenvalues}

We used the algorithm described in \cite{Mac} to compute the eigenvalues
numerically for a true triaxial ellipsoid. As opposed to the numerical
method used for the biaxial eigenvalues, the code used in this case
implements the closest-point algorithm developed in \cite{cpm1, cpm2}. To
compare with the numerics, we take $a=1,b=1+\varepsilon,c=1-\varepsilon,$ so
that $\left( \alpha,\beta,\gamma\right) =\left( 0,1,-1\right) .$ Table \ref%
{table:2} compares the numerics with the analytic formulas for first nine
triaxial eigenvalue subprincipal terms, as obtained in Proposition \ref%
{prop:triax_entries}.

\begin{table}[ptb]
\begin{equation*}
\begin{tabular}{|l|l||l|l|l||l|l|l|}
\hline
\multicolumn{2}{|l||}{} & \multicolumn{3}{l||}{$\varepsilon=0.2$} & 
\multicolumn{3}{l|}{$\varepsilon=0.1$} \\ \hline
$\Lambda_{0}$ & $\Lambda_{1}$ & $\Lambda_{\text{numeric}}$ & $\frac {%
\Lambda_{\text{numeric}}-\Lambda_{0}}{\varepsilon}$ & {err} & $\Lambda _{%
\text{numeric}}$ & $\frac{\Lambda_{\text{numeric}}-\Lambda_{0}}{\varepsilon }
$ & err \\ \hline
{2} & {-1.6} & 1.69763 & -1.511 & {0.088} & 1.84107 & -1.589 & 0.0106 \\ 
\hline
{2} & {0} & 2.05566 & 0.278 & {0.278} & 2.01048 & 0.1047 & 0.105 \\ \hline
{2} & {1.6} & 2.33333 & 1.666 & {0.066} & 2.1603 & 1.603 & 0.003 \\ \hline
{6} & {-3.95897} & 5.24037 & -3.798 & {0.160} & 5.59748 & -4.025 & -0.066 \\ 
\hline
{6} & {-3.42857} & 5.45296 & -2.735 & {0.693} & 5.68282 & -3.171 & 0.256 \\ 
\hline
{6} & {0} & 6.04863 & 0.2431 & {0.243} & 6.0048 & 0.0479 & 0.0479 \\ \hline
{6} & {3.42857} & 6.82912 & 4.145 & {0.717} & 6.36925 & 3.692 & 0.263 \\ 
\hline
{6} & {3.95897} & 6.83013 & 4.150 & {0.191} & 6.3857 & 3.857 & -0.101 \\ 
\hline
\end{tabular}
\ 
\end{equation*}
\caption{Comparison between asymptotic value of $\Lambda_{1}$ and the
estimated numerical value for the triaxial ellipsoid. Here, $(a,b,c)=(1+%
\protect\varepsilon,b=1-\protect\varepsilon,1)$ (corresponding to $(\protect%
\alpha =0,\protect\beta,\protect\gamma)=(1,-1,0)$) and with $\protect%
\varepsilon=0.2$ or 0.1. The error column is the difference between $%
\Lambda_{1}$ and $\frac{\Lambda _{\text{numeric}}-\Lambda_{0}}{\protect%
\varepsilon}.$}
\label{table:2}
\end{table}

The code in \cite{Mac} generates a sparse matrix that corresponds to a
discretization of the Laplace-Beltrami operator for a surface. We use the
spatial resolution of $0.1$ that results in a $30,000\times30,000$ sparse
matrix: the lowest eigenvalue is zero with the next eight eigenvalues being
listed in the second columns of our given tables.

We note that even with a sparse matrix of dimension $3 \times 10^4$, the
control on the error is rather poor; we expect no more than 2-3 digits of
precision. For this reason, the error does not scale linearly in $\varepsilon
$ as would be expected:\ it would require too many meshpoints to resolve up
to $\mathcal{O}(\varepsilon^{2})$ numerically for this two-dimensional
non-symmetric problem:\ the method relies on the MATLAB sparse eigenvalue
solver \texttt{eigs} which is not sufficiently accurate for such large
matrices. Indeed, this problem provides a good test case for the
closest-point method algorithm.

{We conclude this numerical exploration with a discussion of
multiplicity of eigenvalues. For the biaxial case, due to the monotonicity
of the formula (\ref{eqn:Lambda1biaxial}) with respect to $m$, there are
exactly $l$ double eigenvalues (corresponding to $m=1\ldots l$) and a one
single eigenvalue (corresponding to $m=0$) near $\Lambda_{0}=l\left(
l+1\right) $, for a total of $2l+1$ eigenvalues. For a triaxial $%
\alpha\neq\beta\neq\gamma$, extensive numerical experiments indicate that
for a fixed $l,$ the perturbations $\Lambda_{1}$ as given in Proposition \ref%
{prop:triax_entries} are all distinct (we verified this analytically when $%
l\leq3$ since $\Lambda_{1}$ is an eigenvalue of at most 2x2 matrix in that
case, and numerically for $l$ up to 5). As a consequence, we repeat the
following generally held belief in the spectral-geometry literature: Suppose 
$\alpha\neq\beta\neq\gamma.$ Given any $L$, there exists $\varepsilon_{0}$
such that for all $\varepsilon<\varepsilon_{0},$ the set $\{ \Lambda : \,
\Lambda \leq L \}$ contains only simple eigenvalues.

\subsection{Observations on nodal domains}

Figure \ref{fig:efun} shows the first few eigenmodes with $l=1,2,3,4,$ as
given by Proposition \ref{prop:triax_entries}, for several values of $\alpha
,\beta,\gamma.$ The corresponding eigenfunctions are plotted, as well as
their nodal lines.

A \textit{nodal domain} of $u$ is a connected component of the subset $%
\Omega(u)=\{x\in M:u(x)\neq0\}$, the regions of $M$ where $u$ is positive or
negative. In this section, we briefly explore the nodal domain structures
(or better yet, shapes) of our ellipsoidal harmonics. Recall that a version
of Courant's nodal domain theorem says that on a compact, boundaryless
manifold, the number of nodal domains for eigenfunctions produced from the $n
$-th eigenspace (counting multiplicities) is bounded above by $n$.

In the biaxial case near a sphere, there are $2l+1$ eigenvalues near the
eigenvalue $l(l+1)$ with $l$ double eigenvalues and one simple eigenvalue.
The corresonding eigenfunctions have $P_{l}^{m}(\phi)\cos(m\theta)$, $%
m=0\ldots l$ and $P_{l}^{m}(\phi)\sin(m\theta)$, $m=1\ldots l$ as their
leading order terms. These are shown in Figure \ref{fig:efun} (rows 1 to 4)
for $l=1\ldots4.$ Note that those corresponding to double eigenvalues, with
leading order terms $P_{l}^{m}(\phi)(C\cos(m\theta)+D\sin(m\theta)$, all
have $\left( l+1-m\right) 2m$ nodal domains when $0<m\leq l;$ whereas the
simple eigenvalue $P_{l}^{0}(\phi)$ has $\left( l+1\right) $ nodal domains.
Moreover, formula (\ref{eqn:Lambda1biaxial})\ shows that the $2l+1$
eigenvalues near $l\left( l+1\right) $ are monotone in $m.$ This allows for
a full characterization for the number of nodal domains in the biaxial case.

Figure \ref{fig:efun} suggests that when deforming a biaxial ellipsoid to a
triaxial ellipsoid, the nodal line topology changes only at the
\textquotedblleft north\textquotedblright\ and \textquotedblleft
south\textquotedblright\ poles, where $2m$ nodal lines intersect, and does
so in two very specific ways. For example, when $m=3,$ the pole is
desingularized either in this way: \includegraphics[width=0.15%
\textheight]{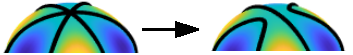} or this way:\ \includegraphics[width=0.15%
\textheight]{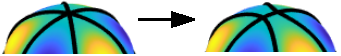}. The latter transformation does not affect the
number of nodal domains, whereas the former reduces it by either $2\left(
m-1\right) $ if $m<l$, or by $m-1$, if $m=l.$ Based on extensive numerical
observations for $l=0\ldots5$, we offer the following conjecture on the
number of nodal domains for our near-sphere ellipsoids.
\begin{figure}[ptb]
\includegraphics[width=0.95\textwidth]{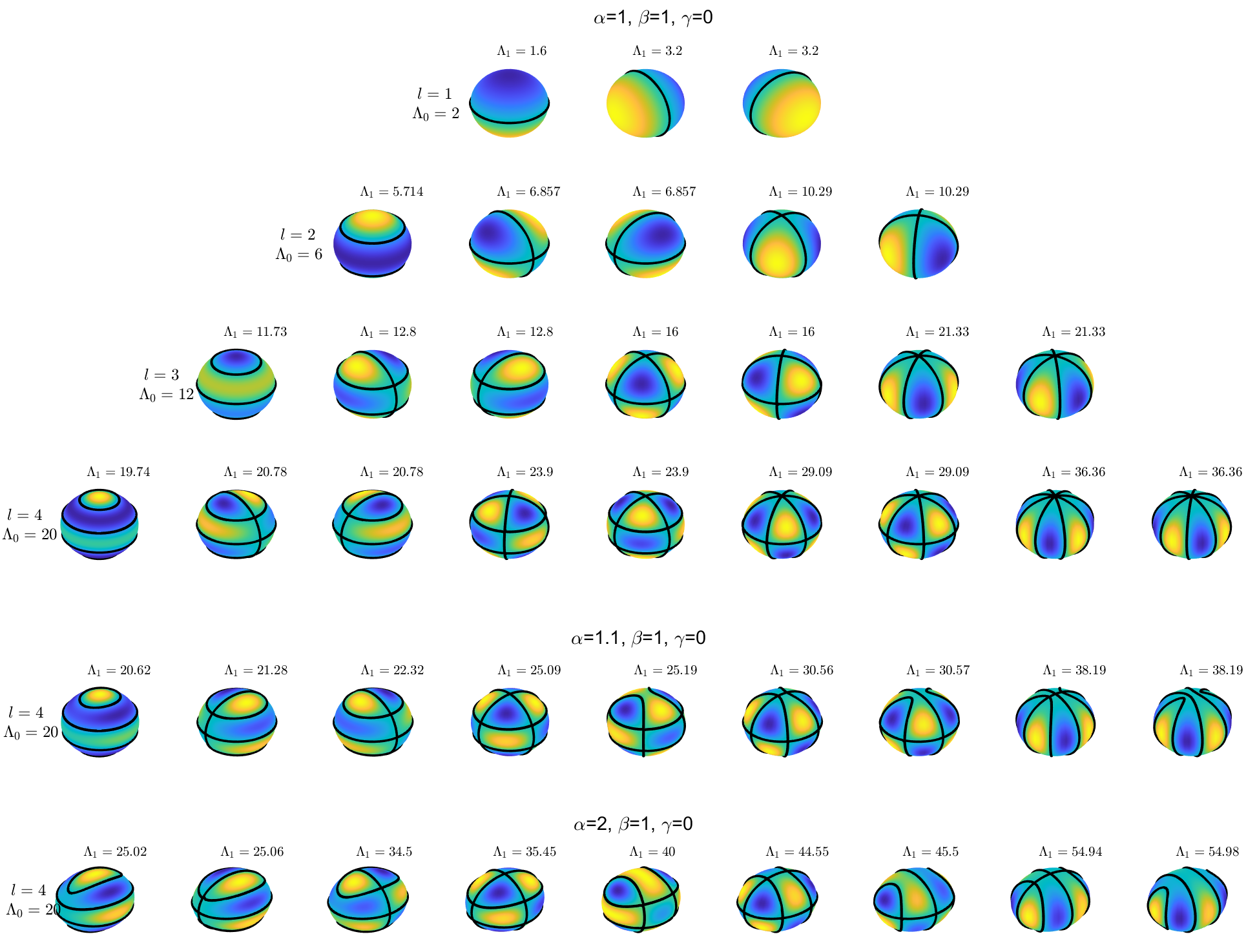}
\caption{The first few eigenfunctions of some ellipsoids, computed from
Proposition \protect\ref{prop:triax_entries}. Corresponding nodal lines and $%
\Lambda_{1}$ are also shown. Parameters are as indicated. }
\label{fig:efun}
\end{figure}

\begin{figure}[ptb]
\includegraphics[width=0.98\textwidth]{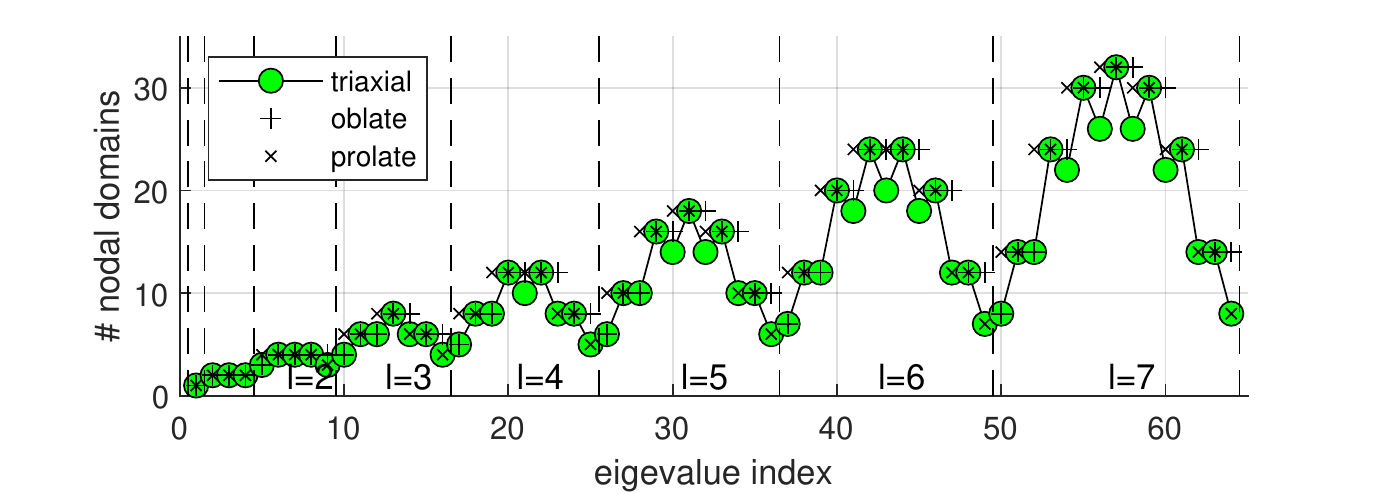}
\caption{The number of nodal domains versus eigenvalue index for the three
cases as given in Conjecture \protect\ref{conj:nodal}. Notice the
fluctuations against Courant's bound.}
\label{fig:nodal}
\end{figure}

\begin{conj}
\label{conj:nodal}Define the sequence $N_{k},$ $k=0\ldots2l$ as follows:%
\begin{equation}
N_{0}=l+1;\ N_{2m-1}=(l+1-m)2m,\ \ N_{2m}=N_{2m-1},\ m=1\ldots l; 
\label{142}
\end{equation}
and sequence $\hat{N}_{k},$ $k=0\ldots2l$ as follows: 
\begin{equation}
\hat{N}_{2m}=\left\{ 
\begin{array}{c}
N_{2m}-2\left( m-1\right) ,\ \ m=1\ldots l-1 \\ 
l+1,\ \ m=l%
\end{array}
\right. ;\ \ \ \ \hat{N}_{2m-1}=N_{2m-1}.
\end{equation}
 For $\epsilon_0$ sufficiently small such that Theorem \ref%
{thm:triaxial} holds, arrange the $2l+1$ eigenvalues near the the level $%
\Lambda_{0}=l(l+1)$ in increasing order. Their nodal domain count is as follows:

\begin{itemize}
\item[(a)] For an oblate ellipsoid ($\alpha=\gamma>\beta$),
the nodal domain count is $N_{0},\ldots,N_{2l}.$

\item[(b)] For a prolate ellipsoid ($\alpha=\gamma<\beta$),
the nodal domain count is $N_{2l},\ldots,N_{0}.$

\item[(c)] For a triaxial ellipsoid ($\alpha\neq\beta\neq\gamma)$, the nodal
domain count is $\hat{N}_{0},\hat{N}_{1},\ldots\hat{N}_{2l}.$\bigskip
\end{itemize}
\end{conj}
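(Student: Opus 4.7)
The plan is to combine the leading-order description of eigenfunctions provided by Theorem \ref{thm:berger_lem} and Proposition \ref{prop:triax_entries} with a topological analysis of how nodal line configurations deform under the $\varepsilon$-perturbation. Since $\psi_m(\varepsilon)\to\psi_m(0)$ in $C^k(S^2)$ for any $k$ along the analytic family, and the nodal set of $\psi_m(0)$ away from its critical points is a smooth $1$-submanifold of $S^2$, the nodal line configuration of $\psi_m(\varepsilon)$ is determined (for $\varepsilon$ small) by the leading-order $\psi_m(0)$, modified only by the way in which degenerate critical points of the leading order are unfolded by the first-order correction.

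For parts (a) and (b) I would proceed as follows. In the biaxial setting $\alpha=\gamma$, the symmetry $S^1$-action survives the perturbation, so by Theorem \ref{thm:biaxial} each leading-order eigenfunction is exactly $P_l^m(\cos\phi)\cos(m\theta)$ or $P_l^m(\cos\phi)\sin(m\theta)$ (no non-trivial mixing), whose zero set is the union of $l-m$ parallels and $2m$ meridians, giving $(l+1-m)\cdot 2m$ nodal domains for $m\geq 1$ and $l+1$ for $m=0$. This yields exactly the sequence $N_k$. The ordering in cases (a) and (b) is then a direct monotonicity check on formula (\ref{eqn:Lambda1biaxial}) in the variable $m^2$: the sign of $\beta-\alpha$ flips between oblate and prolate, producing the reverse ordering.

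For part (c), the key observation is that the leading-order eigenfunction for a triaxial ellipsoid is a genuine linear combination of $v_{m'}$ and $w_{m'}$'s of a fixed parity, as dictated by the block-tridiagonal structure of Proposition \ref{prop:triax_entries}. Since all the $P_{l}^{m}$'s appearing in one block vanish at the two poles to orders that differ by multiples of $2$, the leading-order eigenfunction still has the poles as degenerate critical points where $2m$ nodal arcs emanate (for the ``central'' mode with index $m$ in that block). Under a generic small triaxial perturbation this degenerate intersection unfolds in one of the two model patterns depicted in the text: either into $m$ transverse crossings (which preserves the domain count) or into $m$ disjoint arcs (which merges pairs of adjacent domains). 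The combinatorial claim to verify is that, for the eigenvalue ordering dictated by the spectrum of the matrices (\ref{M}), the second pattern is what occurs, and it reduces the count by exactly $2(m-1)$ when $m<l$ and by $m-1$ in the borderline case $m=l$. This last fact should follow by computing the Hessian of the relevant combination of $v_{m'},w_{m'}$ at the poles, whose signature determines which unfolding is selected; the boundary case $m=l$ is different because then only one block contributes and one fewer desingularization is available.

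The main obstacle will be the last step: rigorously identifying which of the two unfolding patterns occurs for each eigenvalue of the matrices $M_{\cos,\bullet},M_{\sin,\bullet}$, and showing that this choice is consistent with the claimed ordering $\hat N_0,\hat N_1,\ldots,\hat N_{2l}$ of the perturbed eigenvalues. Equivalently, one must show that the eigenvectors of these tridiagonal matrices have a sign pattern in their $v_{m'}/w_{m'}$ coefficients that forces the ``merging'' unfolding at the poles in precisely the right number of cases. A secondary but nontrivial issue is ruling out the birth of new nodal components away from the poles; this I would handle by a quantitative Morse-type argument, using that the critical points of $\psi_m(0)$ off the poles are nondegenerate (verifiable directly on the $P_l^m(\cos\phi)\cos(m\theta)$ building blocks) and therefore persist without topology change for small $\varepsilon$. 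Because the conjecture is already stated as such, the intent of the proposal is to reduce it to this concrete linear-algebraic claim about the eigenvectors of the matrices in Proposition \ref{prop:triax_entries}, which appears tractable for each fixed $l$ but whose uniform treatment in $l$ is where the real work lies.
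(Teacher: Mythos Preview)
The paper offers no proof of this statement: it is explicitly a \emph{conjecture}, supported only by numerical experiments for $l\leq 5$ and by the heuristic remark (immediately preceding the conjecture) that, in passing from biaxial to triaxial, the nodal-line topology changes only through one of two specific desingularizations at the poles. There is therefore no argument in the paper against which to compare your proposal; your outline is in fact a formalization of the paper's own heuristic.

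For parts (a) and (b) your sketch is essentially complete: the $S^1$-symmetry persists on the biaxial ellipsoid, so eigenfunctions are exactly of the separated form $u(\phi)e^{\pm im\theta}$ (not merely at leading order), Sturm theory fixes the zero count of $u$, and monotonicity of (\ref{eqn:Lambda1biaxial}) in $m^2$ gives the ordering. For part (c), the gap you flag is genuine, and I would add a concrete difficulty with your ``central mode'' picture. The eigenvectors of an irreducible tridiagonal matrix have no vanishing components, so a generic eigenvector of, say, $M_{\cos,e}$ has all of $C_0,C_2,\ldots$ nonzero; since $v_m\sim\sin^m\phi$ near the poles, the local behaviour there is governed by the \emph{lowest} index present in the block, not by a putative dominant $m$. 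Hence every eigenfunction from the $M_{\cos,e}$ block is nonzero at the poles, every one from $M_{\sin,e}$ has exactly four nodal arcs meeting at each pole, and so on: the pole structure is constant within each block, so the variation in nodal counts across the $2l+1$ eigenfunctions must come from the global shape of the nodal set, not from a local unfolding at the poles alone. Separately, the ordering $\hat N_0,\ldots,\hat N_{2l}$ presupposes a specific interleaving of the spectra of the four matrices (\ref{M}), which you do not address and which does not follow from the block decomposition by itself.
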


For example take $l=4.$ Then the three sequences in Conjecture \ref%
{conj:nodal} are:

\begin{itemize}
\item[(a)] Oblate:$\ N_{0},\ldots,N_{2l}\ \ =$ $5,\ 8,\ \ \ 8,\ 12,\ 12,\
12,\ 12,\ 8,\ 8.$

\item[(b)] Prolate:$\ N_{2l},\ldots,N_{0}\ =$ $8,\ 8,\ 12,\ 12,\ 12,\ 12,\ \
\ 8,\ 8,\ 5.$

\item[(c)] Triaxial:$\ \hat{N}_{0},\ldots,\hat{N}_{2l}=5,\ 8,\ \ \ 8,\ 12,\
10,\ 12,\ \ \ 8,\ 8,\ 5.$
\end{itemize}

Part of the motivation for this conjecture comes from the
observed low multiplicities in the spectrum of our ellipsoids. Conjecture %
\ref{conj:nodal} states (and the reader can verify)\ that the sequence (a)
corresponds to the number of nodal domains in Figure \ref{fig:efun} (row 4)\
whereas the sequence (c)\ describes the number of nodal domains for rows 5
and 6. Note that \emph{any} triaxial ellipsoid near the sphere is
conjectured to have the same nodal sequence, regardless of the relative
sizes of $\alpha,\beta,\gamma.$ This is reflected in the fact\ the sequence $%
\hat{N}_{k}$ is symmetric: $\hat{N}_{k}=\hat{N}_{2l-k}.$ We verified this
conjecture numerically, for numerous values of $\alpha,\beta,\gamma$ and
with $l$ up to 5.

In Figure \ref{fig:nodal} we plot the number of nodal domains as a function
of the eigenvalue index for an arbitrary triaxial ellipsoid, with parameter $%
l$ up to $7.$ For large $l,$ the maximum number of nodal domains asymptotes
to $\sim l^{2}$ (by taking $m=l/2$ in (\ref{142})) whereas the lowest
asymptotes to $l$ (corresponding to $m=0$).

It is worth noting the works of Levy and Eremenko-Jakobson-Nadirashvilli on
nodal domains on $S^2$. Levy \cite{Lev} constructs high-frequency examples
of spherical harmonics that obtain exactly two nodal domains.
Eremenko-Jakobson-Nadirashvilli \cite{EJN} construct harmonics that obtain
various prescribed topological configurations in their nodal structure. Both
works use perturbation-type arguments.

\subsection{Numerics for large perturbations of spheres}

\begin{figure}[ptb]
\label{fig:p_to_c} \includegraphics[width=0.95%
\textwidth]{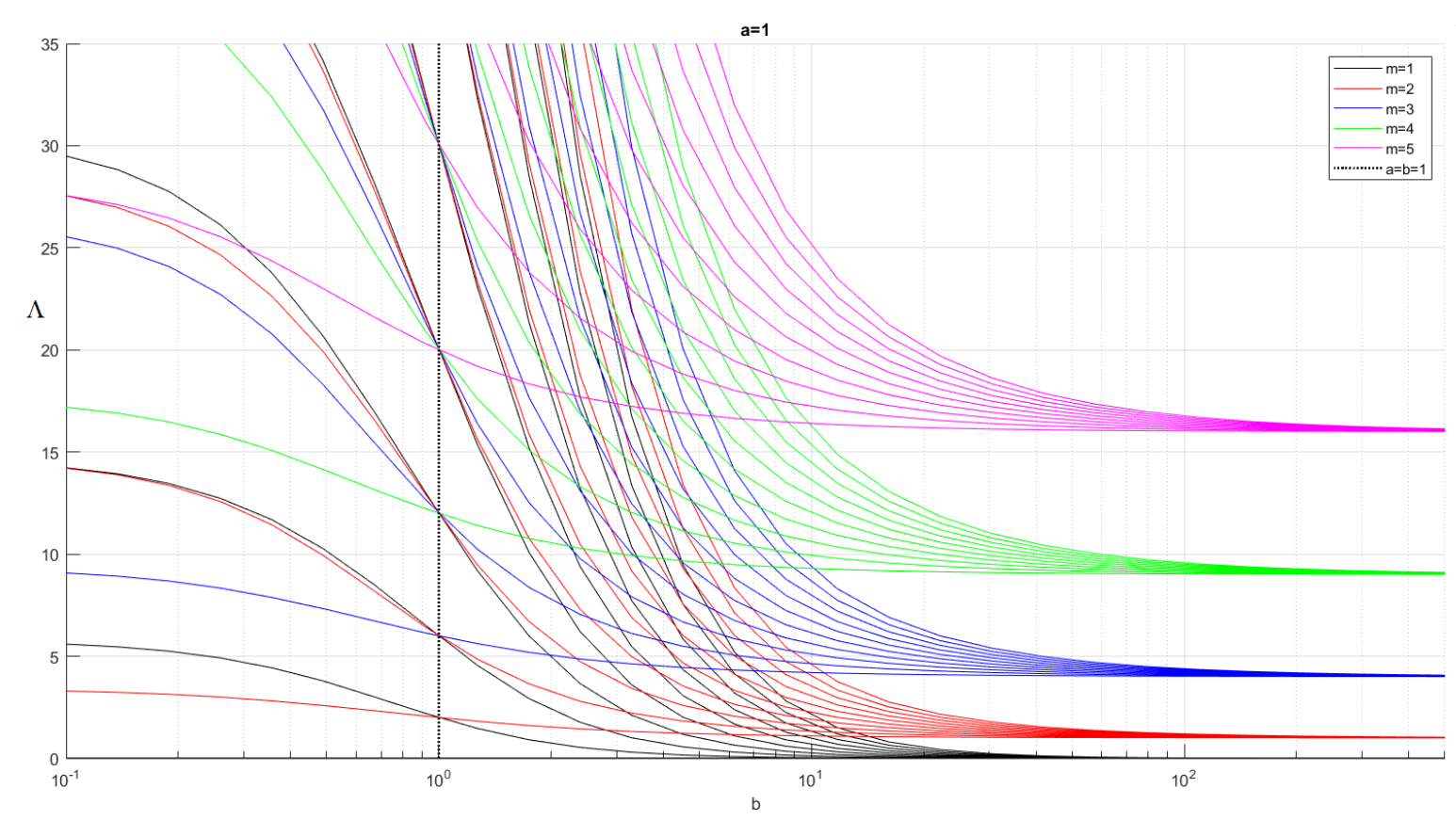}
\caption{$\Lambda$ graphed as a function of the axial parameter $b$ with $a=1
$ fixed, for the bi-axial ellipsoid ($c=a$). The first ten eigenvalues for
modes $m=1,\ldots,5$ are plotted.}
\label{fig:biaxial}
\end{figure}

\begin{figure}[ptb]
\includegraphics[width=0.5 \textwidth]{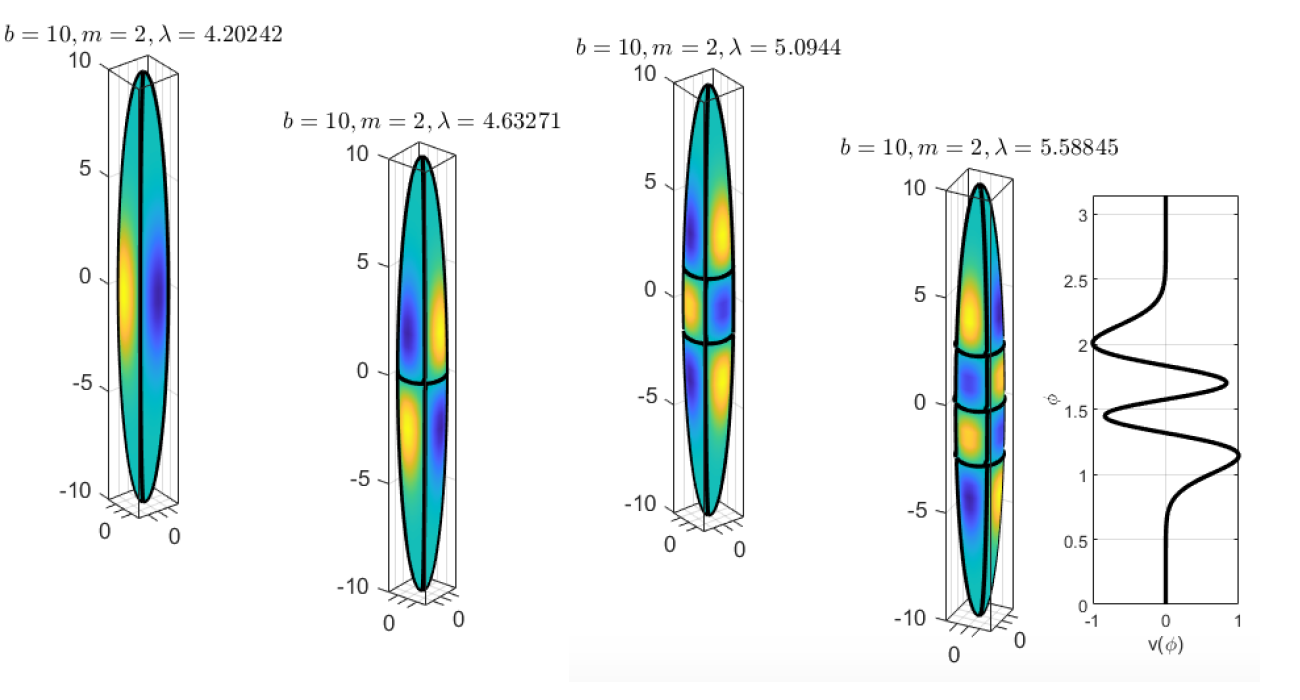}
\caption{Eigenfunctions on cigar-type ellipsoids; in the insert gives the
axial profile of an eigenfunction with eigenvalue $\Lambda=5.58845$. The
solid black lines indicate the zero curves of the plotted eigenfunctions.}
\label{fig:cigar}
\end{figure}

\begin{figure}[ptb]
\includegraphics[width=0.9 \textwidth]{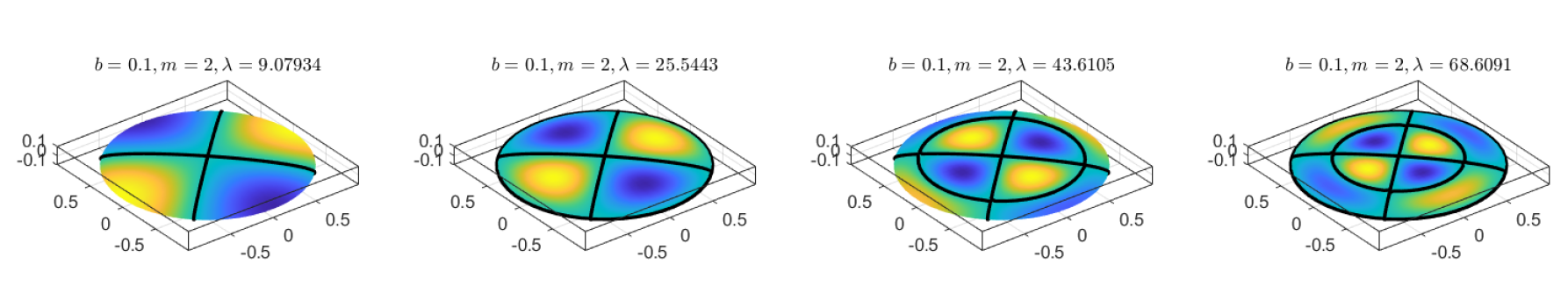}
\caption{Eigenfunctions on disk-type ellipsoids. The solid black lines
indicate the zero curves of the plotted eigenfunctions.}
\label{fig:pancake}
\end{figure}

Figure \ref{fig:biaxial} shows the numerically computed eigenvalues of a
biaxial ellipsoid using the method described in Section \ref%
{sect:biaxial_numerics}. In this regime, we set $a=1$ and varied $b$ from $%
0.1$ to $500.$

The case of the sphere corresponds to the solid black vertical line $b=1$,
and as expected, multiple eigenvalues collide at this point with $%
\Lambda=l(l+1).$ There are numerous eigenvalue crossings far away from the
sphere.

With $a=1$ and in the limit $b\rightarrow\infty,$ the ellipsoid takes a
cigar-type shape. In this case, \emph{all} of the eigenfunctions
corresponding to mode $m$ appear to asymptote to $\Lambda\sim m^{2}.$
Furthermore, the eigenfunctions appear to be \textquotedblleft
microlocalized" (that is, exhibiting its main oscillations) near the center
of the cigar, as illustrated in Figure \ref{fig:cigar}. It is an interesting
open question to explain this \textquotedblleft
microlocalization\textquotedblright\ in this asymptotic regime.

With $a=1$ and in the limit $b\rightarrow0,$ the ellipsoid degenerates into
a two-dimensional disk. In this case, the problem appears to degenerate into
a union of the eigenvalues of a unit disk with either Dirichlet or Neumann
boundary conditions as illustrated in Figure \ref{fig:pancake}. In this
limit, the numerics suggest that the eigenvalues approach roots of either $%
J_{m}(\sqrt{\Lambda})=0$ or $J_{m}^{\prime}(\sqrt{\Lambda})=0\,$where $J_{m}$
is the Bessel function of order $m.$ This behaviour is reminiscent of
eigenvalue asymptotics in the presence of degenerating metrics at least in
the case of the \textquotedblleft singular" manifold being boundaryless; see
for instance \cite{S13} and the references therein.

\section{Appendix A: Calculation of (\protect\ref{entries})}

\label{app}

We will use the notation%
\begin{align}
v_{m}(\phi,\theta) & =\cos\left( m\theta\right) P_{l}^{m}(\cos \phi),\ \ \ \
w_{m}(\phi,\theta)=\sin\left( m\theta\right) P_{l}^{m}(\cos\phi),  \notag \\
P_{l}^{m}(t) & :=A_{l,m}Q_{l}^{m}(t),  \notag \\
Q_{l}^{m}\left( t\right) & :=\left( 1-t^{2}\right) ^{m/2}\frac {%
\partial^{m+l}}{\partial t^{m+l}}\left[ \left( 1-t^{2}\right) ^{l}\right] \\
\text{where} & \text{ }A_{l,m}\text{ is chosen so that }\int_{0}^{2\pi}%
\int_{0}^{\pi}v_{m}(\phi,\theta)\sin\phi d\phi d\theta=1.  \notag
\end{align}
A couple of key integrals that appear in the computations are:%
\begin{equation}
J_{0}=\int_{-1}^{1}\left( 1-t^{2}\right) ^{l}dt;\ \ \ \
J_{2}=\int_{-1}^{1}t^{2}\left( 1-t^{2}\right) ^{l}dt.  \label{J02}
\end{equation}
All of the quantities will be ultimately expressed in terms of their ratio:%
\begin{equation}
\frac{J_{2}}{J_{0}}=\frac{1}{2l+3}.  \label{Jratio}
\end{equation}

We start with the following lemma.

\begin{lem}
\label{lemma:ratio}%
\begin{equation}
\frac{\int_{-1}^{1}t^{2}\left( P_{l}^{m}(t)\right) ^{2}dt}{%
\int_{-1}^{1}\left( P_{l}^{m}(t)\right) ^{2}dt}=\frac{\int_{-1}^{1}t^{2}%
\left( Q_{l}^{m}(t)\right) ^{2}dt}{\int_{-1}^{1}\left( Q_{l}^{m}(t)\right)
^{2}dt}=\frac{2l^{2}-2m^{2}+2l-1}{\left( 2l+3\right) \left( 2l-1\right) }.
\label{418}
\end{equation}
\end{lem}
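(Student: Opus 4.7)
The first equality in (\ref{418}) is automatic: since $P_l^m = A_{l,m} Q_l^m$, the constant $A_{l,m}^2$ factors out of both the numerator and the denominator of the left-hand ratio and cancels, reducing it to the right-hand ratio. So the content of the lemma is evaluating this common value.

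For the evaluation, my plan is to invoke the classical three-term recurrence for associated Legendre functions,
\begin{equation*}
(2l+1)\, t\, P_l^m(t) = (l-m+1)\, P_{l+1}^m(t) + (l+m)\, P_{l-1}^m(t),
\end{equation*}
valid for $l \ge m \ge 0$ with the convention $P_{m-1}^m \equiv 0$. Squaring both sides and integrating over $[-1,1]$, the cross-term $2(l-m+1)(l+m)\int_{-1}^{1} P_{l+1}^m P_{l-1}^m\, dt$ vanishes by $L^2$-orthogonality of $\{P_k^m\}_{k\ge m}$ on $[-1,1]$ with fixed $m$. What remains is
\begin{equation*}
(2l+1)^2 \int_{-1}^{1} t^2 (P_l^m(t))^2\, dt = (l-m+1)^2 \|P_{l+1}^m\|_{L^2}^2 + (l+m)^2 \|P_{l-1}^m\|_{L^2}^2.
\end{equation*}

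I would then divide by the classical norm $\|P_l^m\|_{L^2}^2 = \frac{2}{2l+1}\frac{(l+m)!}{(l-m)!}$ and apply the same formula at indices $l \pm 1$. Telescoping the factorials, the ratios $\|P_{l \pm 1}^m\|^2/\|P_l^m\|^2$ collapse to the clean expressions that ultimately give
\begin{equation*}
\frac{\int_{-1}^{1} t^2 (P_l^m)^2\, dt}{\int_{-1}^{1} (P_l^m)^2\, dt} = \frac{1}{2l+1}\left[\frac{(l+1)^2 - m^2}{2l+3} + \frac{l^2 - m^2}{2l-1}\right].
\end{equation*}
Placing the bracketed quantity over the common denominator $(2l+3)(2l-1)$ and expanding, one checks that the numerator equals $(2l+1)(2l^2 - 2m^2 + 2l - 1)$; cancelling the factor $(2l+1)$ then yields the claimed value.

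The main obstacle is nothing deep: it is the algebraic identity at the very end, which is mechanical but must be carried out carefully. An alternative route, more in keeping with the $J_0, J_2$ bookkeeping of the appendix, is to work directly with $Q_l^m$ by substituting the Rodrigues-type formula $Q_l^m(t) = (1-t^2)^{m/2} \partial_t^{m+l}[(1-t^2)^l]$ into both integrals and integrating by parts $m+l$ times, reducing everything to $(2l)!\, J_0$- and $(2l)!\, J_2$-type expressions; the identity $J_2/J_0 = 1/(2l+3)$ then finishes the computation. Either route reaches the same final ratio.
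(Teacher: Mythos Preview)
Your argument is correct. The three-term recurrence $(2l+1)tP_l^m = (l-m+1)P_{l+1}^m + (l+m)P_{l-1}^m$, squared and integrated using orthogonality, does give
\[
\frac{\int_{-1}^{1} t^2 (P_l^m)^2\, dt}{\int_{-1}^{1} (P_l^m)^2\, dt}=\frac{1}{2l+1}\left[\frac{(l+1)^2-m^2}{2l+3}+\frac{l^2-m^2}{2l-1}\right],
\]
and the final algebraic check that the numerator over $(2l+3)(2l-1)$ collapses to $(2l+1)(2l^2-2m^2+2l-1)$ is straightforward. The edge case $l=m$ is handled by your convention $P_{m-1}^m\equiv 0$, and the use of the \emph{standard} associated Legendre normalization (rather than the paper's $A_{l,m}$-scaled version) is harmless since you are computing a ratio.

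This is, however, a genuinely different route from the paper's. The paper works directly with the Rodrigues-type expression $Q_l^m(t)=(1-t^2)^{m/2}\partial_t^{m+l}[(1-t^2)^l]$, integrates by parts $m+l$ times to extract the leading $t^{m+l}$-coefficient, and reduces both $\int (Q_l^m)^2$ and $\int t^2(Q_l^m)^2$ to combinatorial multiples of $J_0=\int_{-1}^1(1-t^2)^l\,dt$ and $J_2=\int_{-1}^1 t^2(1-t^2)^l\,dt$, then invokes $J_2/J_0=1/(2l+3)$. Your ``alternative route'' paragraph is essentially this. The recurrence-based argument is cleaner and more self-contained for this single identity, whereas the paper's $J_0,J_2$ bookkeeping has the advantage of reusing exactly the same machinery for the off-diagonal integrals $\langle v_{m-2},A_1 v_m\rangle$ computed later in the appendix, where no analogous recurrence shortcut is available.
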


\textbf{Proof. }This follows by successive integration by parts. We start
with $\int_{-1}^{1}\left( Q_{l}^{m}(t)\right) ^{2}:$

\begin{align}
\int_{-1}^{1}\left( Q_{l}^{m}(t)\right) ^{2} & =\int_{-1}^{1}\left(
1-t^{2}\right) ^{m}\frac{\partial^{m+l}}{\partial t^{m+l}}\left[ \left(
1-t^{2}\right) ^{l}\right] \frac{\partial^{m+l}}{\partial t^{m+l}}\left[
\left( 1-t^{2}\right) ^{l}\right] dt  \notag \\
& =\int_{-1}^{1}\left( \frac{\left( 2l\right) !}{(l-m)!}t^{m+l}+\ldots%
\right) \frac{\partial^{m+l}}{\partial t^{m+l}}\left[ \left( 1-t^{2}\right)
^{l}\right] dt \\
& =\frac{\left( 2l\right) !\left( l+m\right) !}{(l-m)!}J_{0}
\end{align}
where $J_{0}$ is as in (\ref{J02}). A similar computation yields%
\begin{align}
\int_{-1}^{1}\left( Q_{l}^{m}(t)\right) ^{2}t^{2}dt & =\int_{-1}^{1}\left(
t^{2}\left( 1-t^{2}\right) ^{m}\frac{\partial^{m+l}}{\partial t^{m+l}}\left[
\left( 1-t^{2}\right) ^{l}\right] \right) \frac {\partial^{m+l}}{\partial
t^{m+l}}\left[ \left( 1-t^{2}\right) ^{l}\right] dt  \notag \\
& =\frac{\left( 2l\right) !\left( m+l\right) !}{(l-m)!}\left( \frac{%
2l^{2}-2m^{2}+2l-1}{\left( 2l+3\right) \left( 2l-1\right) }\right) J_{0},
\end{align}
where (\ref{Jratio}) was used.

Equation (\ref{418})\ follows immediately. $\blacksquare$

\textbf{Derivation of (\ref{entries}a-d).} From (\ref{A1u}) we obtain%
\begin{equation}
\left\langle v_{m},A_{1}v_{m}\right\rangle =\frac{1}{\int_{0}^{\pi }\left(
P_{l}^{m}(\cos \phi )\right) ^{2}\sin \phi d\phi }\int_{0}^{\pi }d\phi {%
\times }\left\{ 
\begin{array}{c}
g_{m}(\phi )P_{l}^{m}(\cos \phi )\sin \phi ,\ m\neq 1 \\ 
g_{1}(\phi )P_{l}^{1}(\cos \phi )\sin \phi +g_{1-}(\phi )P_{l}^{1}(\cos \phi
)\sin \phi ,\ m=1%
\end{array}%
\right.  \label{236a}
\end{equation}%
and similarly, 
\begin{equation}
\left\langle w_{m},A_{1}w_{m}\right\rangle =\frac{1}{\int_{0}^{\pi }\left(
P_{l}^{m}(\cos \phi )\right) ^{2}\sin \phi d\phi }\int_{0}^{\pi }d\phi {%
\times }\left\{ 
\begin{array}{c}
g_{m}(\phi )P_{l}^{m}(\cos \phi )\sin \phi ,\ m\neq 1 \\ 
g_{1}(\phi )P_{l}^{1}(\cos \phi )\sin \phi -g_{1-}(\phi )P_{l}^{1}(\cos \phi
)\sin \phi ,\ \ m=1%
\end{array}%
\right.  \label{236b}
\end{equation}%
Rewrite (\ref{g}c) as%
\begin{equation*}
g_{m}(\phi )=\left\{ A+B\cos ^{2}\phi \right\} P_{l}^{m}(\cos \phi )+C\sin
\left( \phi \right) \cos \left( \phi \right) {\partial }_{\phi
}P_{l}^{m}(\cos \phi )
\end{equation*}%
where 
\begin{equation*}
C=\left( \alpha +\beta -2\,\gamma \right) ,\ \ A=C{m}^{2}+2\gamma l\left(
l+1\right) ,\ \ \ \ B=Cl(l+1).
\end{equation*}%
Then

\begin{align*}
\int_{0}^{\pi }g_{m}(\phi )P_{l}^{m}(\cos \phi )\sin \phi d\phi &
=\int_{-1}^{1}dt\left\{ \left( P_{l}^{m}(t)\right) ^{2}\left(
A+Bt^{2}\right) -Ct\left( 1-t^{2}\right) P_{l}^{m}(t)\partial
_{t}P_{l}^{m}(t)\right\} \\
& =\left( A+\frac{C}{2}\right) \int_{-1}^{1}\left( P_{l}^{m}(t)\right)
^{2}dt+\left( B-\frac{3C}{2}\right) \int_{-1}^{1}\left( P_{l}^{m}(t)\right)
^{2}t^{2}dt
\end{align*}%
This yields%
\begin{align}
\frac{\int_{0}^{\pi }g_{m}(\phi )P_{l}^{m}(\cos \phi )\sin \phi d\phi }{%
\int_{0}^{\pi }\left( P_{l}^{m}(\cos \phi )\right) ^{2}\sin \phi d\phi }& =A+%
\frac{C}{2}+\left( B-\frac{3C}{2}\right) \frac{\int_{-1}^{1}\left(
P_{l}^{m}(t)\right) ^{2}t^{2}dt}{\int_{-1}^{1}\left( P_{l}^{m}(t)\right)
^{2}dt}  \notag \\
& =(\alpha +\beta -2\gamma )\frac{2l\left( l+1\right) }{\left( 2l+3\right)
\left( 2l-1\right) }\left( l^{2}+m^{2}+l-1\right) +2l\left( l+1\right)
\gamma .  \label{236}
\end{align}%
where we used Lemma \ref{lemma:ratio}.

In the case when $m=1,$we also need to evaluate $\frac{\int_{0}^{\pi
}g_{1-}(\phi )P_{l}^{1}(\cos \phi )\sin \phi d\phi }{\int_{0}^{\pi }\left(
P_{l}^{1}(\cos \phi )\right) ^{2}\sin \phi d\phi }.\ $We compute%
\begin{align*}
\frac{2}{\alpha -\beta }\int_{0}^{\pi }g_{1-}(\phi )P_{l}^{1}(\cos \phi
)\sin \phi d\phi & =\int_{-1}^{1}\left( 1+l\left( l+1\right) t^{2}\right)
\left( P_{l}^{1}(t)\right) ^{2}-t\left( 1-t^{2}\right) \partial _{t}\left(
P_{l}^{1}(t)\right) P_{l}^{1}(t)dt \\
& =\frac{3}{2}\int_{-1}^{1}\left( P_{l}^{1}(t)\right) ^{2}dt+\left( l\left(
l+1\right) -\frac{3}{2}\right) \int_{-1}^{1}t^{2}\left( P_{l}^{1}(t)\right)
^{2}dt
\end{align*}%
so that, using Lemma \ref{lemma:ratio} we obtain 
\begin{equation}
\frac{\int_{0}^{\pi }g_{1-}(\phi )P_{l}^{1}(\cos \phi )\sin \phi d\phi }{%
\int_{0}^{\pi }\left( P_{l}^{1}(\cos \phi )\right) ^{2}\sin \phi d\phi }%
=\left( \alpha -\beta \right) \frac{l^{2}\left( l+1\right) ^{2}}{\left(
2l+3\right) \left( 2l-1\right) }.  \label{237}
\end{equation}%
Combining (\ref{236a}, \ref{236b}, \ref{236}, \ref{237})\ yields (\ref%
{entries}a-d).

\bigskip

\bigskip

\textbf{Derivation of (\ref{entries}e-f)}

Note that $\left\langle v_{m-2},L_{1}v_{m}\right\rangle
=C_{m-2}\int_{0}^{\pi }g_{m-}(\phi )P_{l}^{m-2}(\cos \left( \phi )\right)
\sin \phi d\phi ,\ \ $where $C_{m}=\int_{0}^{2\pi }\cos ^{2}\left( \theta
m\right) d\theta =\left\{ 
\begin{array}{c}
2\pi ,\ \ m=0 \\ 
\pi ,\ \ m\geq 1%
\end{array}%
\right. .$ We further write%
\begin{align*}
\int_{0}^{\pi }& g_{m-}(\phi )P_{l}^{m-2}(\cos \phi )\sin \phi d\phi =\frac{%
\beta -\alpha }{2}A_{l,m}A_{l,m-2} \\
& \times \left\{ \left( l+1\right) lI_{13}+\left( -l(l+1)-m^{2}\right)
I_{12}+2m(m-1)I_{11}+I_{2}\right\}
\end{align*}

where%
\begin{align*}
I_{11} & =\int_{0}^{\pi}\frac{Q_{l}^{m}(\cos\phi)Q_{l}^{m-2}(\cos\phi )}{%
\,\sin\phi}d\phi=0;\ \  \\
I_{12} & =\int_{0}^{\pi}Q_{l}^{m}(\cos\phi)Q_{l}^{m-2}(\cos\phi)\sin\phi
d\phi;\ \ \  \\
I_{13} & =\int_{0}^{\pi}Q_{l}^{m}(\cos\phi)Q_{l}^{m-2}(\cos\phi)\sin^{3}\phi
d\phi; \\
I_{2} & =\int_{0}^{\pi}\cos\left( \phi\right) \left( {-\sin}^{2}{\phi +}%
2(m-1)\right) \partial_{\phi}Q_{l}^{m}(\cos\phi)Q_{l}^{m-2}(\cos\phi )d\phi
\\
\frac{1}{C_{m}A_{m,l}^{2}} & =\int_{0}^{\pi}\left( Q_{l}^{m}(\cos
\phi)\right) ^{2}\sin\phi d\phi
\end{align*}

All these integrals are all evaluated using successive integration parts,
until they are expressed in terms $\int_{-1}^{1}\left( 1-t^{2}\right) ^{l}dt$
and $\int_{-1}^{1}t^{2}\left( 1-t^{2}\right) ^{l}dt.$ Skipping the details,
we obtain

\begin{align*}
I_{11}& =\int \frac{P_{l}^{m}P_{l}^{m-2}}{\,\sin \phi }d\phi =0, \\
I_{12}& =-\frac{(2l)!(m-2+l)!}{(l-m)!}J_{0} \\
I_{13}& =\frac{\left( 2l\right) !\left( m+l\right) !}{\left( l-m\right) !}%
\left( \frac{1}{2}J_{2}-\left( m+\frac{\left( l-m\right) \left( l-m-1\right) 
}{2\left( 2l-1\right) }\right) \frac{1}{\left( m+l\right) \left(
m+l-1\right) }J_{0}\right) \\
\frac{1}{C_{m}A_{m,l}^{2}}& =\frac{\left( 2l\right) !\left( l+m\right) !}{%
(l-m)!}\frac{\left( 2l\right) !\left( l+m\right) !}{(l-m)!}J_{0}
\end{align*}%
After some algebra we then obtain

\begin{equation*}
\left\langle v_{m-2},L_{1}v_{m}\right\rangle =\left( \beta -\alpha \right) 
\sqrt{\frac{C_{m-2}}{C_{m}}}\frac{l\left( l+1\right) }{\left( 2l-1\right)
\left( 2l+3\right) }\sqrt{(l-m+1)(l-m+2)\left( l+m-1\right) \left(
l-m\right) }
\end{equation*}%
which shows\ (\ref{entries}e). Formula (\ref{entries}f) is evaluated
analogously.

\section{Acknowledgements}

We are grateful to Tony Wong for his help with codes from \cite{Mac} that were instrumental in generating numerics for the triaxial ellipsoid.  Thanks also to Nilima Nigam, Iosif Polterovich, and Holger Dullin for their insights.  SE and TK were supported by the NSERC Discovery Grant program during the writing of this article.

\end{document}